\crefname{hypothesis}{Hypothesis}{Hypotheses}
\title{Long-run risk sensitive impulse control}
\author{Damian Jelito\thanks{Institute of Mathematics, Jagiellonian University, Cracow, Poland,
  (\email{damian.jelito@im.uj.edu.pl}, \email{marcin.pitera@im.uj.edu.pl}); Marcin Pitera acknowledges research support by NCN grant 2016/23/B/ST1/00479.}
\and Marcin Pitera\footnotemark[1]
\and  {\L}ukasz Stettner\thanks{Institute of Mathematics, Polish Academy of Sciences, Warsaw, Poland,
  (\email{l.stettner@impan.pl});  research supported by NCN grant 2016/23/B/ST1/00479.}}
\DeclareMathOperator{\E}{\mathbb{E}}
\newcommand{\vep}{\varepsilon}
\DeclareMathOperator*{\argmin}{arg\,min}
\def\cF{\mathcal{F}}
\def\cT{\mathcal{T}}
\def\bF{\mathbb{F}}
\def\bP{\mathbb{P}}
\def\bE{\mathbb{E}}
\def\bR{\mathbb{R}}
\def\bT{\mathbb{T}}
\def\bN{\mathbb{N}}
\def\bV{\mathbb{V}}
\def\namedlabel#1#2{\begingroup
    #2%
    \def\@currentlabel{#2}%
    \phantomsection\label{#1}\endgroup
}
\begin{document}

\maketitle

\begin{abstract}
In this paper we consider long-run risk sensitive average cost impulse control applied to a continuous-time Feller-Markov process. Using the probabilistic approach, we show how to get a solution to a suitable continuous-time Bellman equation and link it with the impulse control problem. The optimal strategy for the underlying problem is constructed as a limit of dyadic impulse strategies by exploiting regularity properties of the linked risk sensitive optimal stopping value functions. In particular, this shows that the discretized setting could be used to approximate near optimal strategies for the underlying continuous time control problem, which facilitates the usage of the standard approximation tools. For completeness, we present examples of processes that could be embedded into our framework.
\end{abstract}

\vspace{-0.1cm}
\begin{keywords}
 Impulse control, Bellman equation, risk sensitive control, multiplicative Poisson equation, risk sensitive criterion
\end{keywords}

\vspace{-0.1cm}

\begin{AMS}
 93E20, 49N25, 93C10, 60J25
\end{AMS}

\section{Introduction}\label{S:introduction}
The risk sensitive control could be considered as a non-linear extension of the classical risk neutral control. While in the standard case the linear expectation operator is applied directly to a reward-cost functional, risk sensitive control is based on exponential utility certainty equivalent with additional risk aversion specification; see~\cite{Whi1990} and references therein. Its links to entropy modeling and applications to finance have been extensively studied in the literature; see~\cite{BiePli1999,Nag2003,PitSte2016}.

While most applied (impulse) controls are in fact based on linear or quadratic objective functionals, the recent rapid growth of computation power made entropy-based controls feasible; see e.g. \cite{ZopParBagSan2020,KusDup2012,BieCheCiaCouJea2019}. In particular, this includes long-run controls, where the potential optimal equilibrium strategies make the framework more robust, invariant to turbulences imposed by frequently unknown termination point, and time-consistent; see~\cite{BiePli1999,BiePli2003,DacLle2014} and references therein.

Due to practical reasons, most applied control systems are based on impulse strategies, as discrete interventions are the only feasible strategies. In consequence, there is a rich literature related to this topic and the impulse control theory is constantly evolving; see e.g. \cite{Rob1981,BenLio1984,DufPiu2016,MenRob2017,PalSte2017}. Still, while impulse control is among the most popular forms of control, its coverage in the risk sensitive case is limited and so far only very specific cases were studied; see~\cite{SadSte2002, Nag2007, PitSte2019, HdiKar2011}. This might be traced back to the fact that the applications of standard methods from the risk neutral case often lead to difficult problems involving quasi variational inequalities and differential equations techniques; see e.g.~\cite{Nag2006,DavGuo2010,BelChr2017,AraBis2020}. Consequently, development of novel methods and proof techniques is required.

In this paper we follow a probabilistic approach and extend results from~\cite{Rob1981}, where the risk neutral optimal stopping problem is linked to the impulse control; see also \cite{PesShi2006,Nag2007b,MenRob2017}. Namely, we consider the long-run risk sensitive impulse control problem
\begin{equation}\label{eq:intro2}
\limsup_{T\to\infty} \frac{1}{T} \ln \E_{(x,V)} \left[\exp\left({\int_0^T f(X_s) ds + \sum_{i=1}^{\infty} 1_{\{\tau_i\leq T\}}  c(X_{\tau_i^-},\xi_i)}\right)\right],
\end{equation}
where the Feller-Markov process $X=(X_t)$ starting in $x$ is controlled by impulse strategies $V$ with a {\it reward} function $f$ and a {\it shift-cost} function $c$. The impulse strategies $V=(\tau_i,\xi_i)_{i\in\bN}$ are described by sequences $(\tau_i)$ and $(\xi_i)$  indicating impulse times and after-impulse states of the process, respectively, and $X_{\tau_i^-}$ denotes the state of $X$ right before the impulse.
Hence, up to time $\tau_1$ the process $X$ evolves according to its usual dynamics, then it is shifted to $\xi_1$ and starts its evolution again. Moreover, we assume that $f$ and $c$ are continuous and bounded, while shifts take values in a compact set. Expectation $\bE_{(x,V)}$ is defined on a probability space corresponding to the controlled process; see Section~\ref{S:preliminaries} for details. We refer to \cite{PalSte2017,PitSte2019,SadSte2002} where similar framework has been studied.

The main result of this paper states that under suitable assumptions there exists an optimal control to \eqref{eq:intro2}. It is also shown that (near-optimal) strategies could be obtained using standard approximation techniques based on a simplified dyadic setting. Those novel results prove that the risk-sensitive control could be efficiently embedded into the impulse control framework and feasible strategies could be constructed. We want to emphasize that so far the problem of the generic form~\eqref{eq:intro2} was studied only in~\cite{SadSte2002}, where a very specific risk-sensitive setting was considered; see also~\cite{Nag2007, PitSte2019, HdiKar2011}.

In our approach, the optimal strategy is constructed as a limit of dyadic impulse strategies by exploiting results from~\cite{PitSte2019} and~\cite{JelPitSte2019a}, i.e. by combining discrete time existence results that are based on the span-contraction approach with regularity properties of risk sensitive optimal stopping value functions. More explicitly, using the change of measure technique based on the Multiplicative Poisson Equation (MPE) solution, we establish a link between impulse control Bellman equation and the optimal stopping problem considered in~\cite{JelPitSte2019a}. As expected, the strategy is characterized by the relation between optimal values of the dyadic problems and the type of the semigroup associated with the underlying Markov process. To get the characterization result, the finite time horizon equivalent of \eqref{eq:intro2} could be rewritten as a limit of problems in which dyadic strategies with a finite number of impulses are considered.

While the main focus of this paper is put on theoretical (existential) aspects of the problem, the results might be of interest to practitioners. In a nutshell, our paper provides a link between discrete (dyadic) setting and continuous-time setting by laying out conditions under which one could use discretized approximative algorithms to efficiently solve the continuous problem. Indeed, we show that a series of $\varepsilon$-optimal strategies for the correctly specified dyadic problem could be used to obtain a nearly optimal strategy for the original problem; we discuss this point in details in Section~\ref{S:continuous_control}, after stating Theorem~\ref{th:opt_strategy_continuous}. We refer to \cite{KusDup2012} and references therein for the description of the approximation techniques used in the stochastic control theory; see also \cite{MihNeu2002,ZopParBagSan2020} for methods based on machine learning that include reinforcement learning and neural networks in the context of (risk sensitive) stochastic control.

This paper is organized as follows:  In Section~\ref{S:preliminaries}, we establish the framework and the linked notation that is used throughout the paper. Section~\ref{S:discrete_control} discusses the dyadic case; this section incorporates the results from \cite{PitSte2019}. Next, in Section~\ref{S:continuous_control}, we solve problem~\eqref{eq:intro2}; Theorem~\ref{th:existence_Bellman_continuous} and Theorem~\ref{th:opt_strategy_continuous} might be seen as the main contribution of this paper. Section~\ref{S:examples} presents three examples of processes that could be embedded into our framework. For reader convenience, we moved auxiliary results to the appendix. In Appendix~\ref{S:stopping}, we outline some properties of the risk-sensitive optimal stopping framework that are used in the proofs; this is an extract from~\cite{JelPitSte2019a}. In Appendix~\ref{S:finite_proof}, we present supplementary results linked to the finite time impulse control problem.

\section{Preliminaries}\label{S:preliminaries}
Let  $(\Omega,\cF,\bF,\bP)$ be a continuous time filtered probability space. We assume that $\bF:=(\cF_t)_{t\in \bT}$, $\bT:=\bR_+$, $\cF_0$ is trivial, $\cF\supseteq \bigcup_{t\in\bT}\cF_t$, and the usual conditions are satisfied. We use $X:=(X_t)$ to denote a continuous-time standard Markov process taking values in a locally compact separable metric space $E$ endowed with metric $\rho$ and Borel $\sigma$-field $\mathcal{E}$; see Definition 4 in \cite[Section 1.4]{Shi1978} for details. Throughout the paper we fix $\overline{x}\in E$ and define $\Vert x\Vert:=\rho(x,\overline{x})$ for $x\in E$; note that $\|\cdot\|$ might not be a norm. Moreover, with slight abuse of notation, for any $f\colon E\to \bR$ we use $\|f\|$ to denote the supremum norm of $f$. We use $C(E)$ to denote the family of continuous and bounded functions on $E$ and assume that $X$ satisfies the $C$-Feller property, i.e. for any $t>0$ we have $P_tC(E)\subset C(E)$, where $(P_t)$ is the transition semigroup of $X$.

As in \eqref{eq:intro2}, with slight abuse of notation, we also use $X$ to denote a controlled process. For any $V=(\tau_i,\xi_i)_{i\in\bN}$, we assume that the sequence of stopping times $(\tau_i)$ is increasing while the shifts $(\xi_i)$ are adapted to the filtration and take values in a compact set $U\subseteq E$, i.e. that $\xi_i$ is $\cF_{\tau_i}$-measurable and $\xi_i \in U$, for any $i\in\bN$. For brevity, we use $\bV$ to denote the set all (admissible) control strategies. For any $x\in E$ and $V\in\bV$ we consider a controlled process probability space that is constructed following the logic from \cite{Rob1978}. In a nutshell, we consider a countable product of canonical spaces of {\it c\`{a}dl\`{a}g} functions with values in $E$, and with the inductively defined filtration. For $t\in [\tau_{i-1},\tau_{i})$ we set $X_t=x_t^{i-1}$, where $x_t^{i-1}$ corresponds to $(i-1)$th coordinate of the canonical process; see also \cite[Section 2]{Ste1983} or \cite{PalSte2017} for more details. For clarity, we use $\bP_{(x,V)}$ and  $\bE_{(x,V)}$ to denote the probability measure and expectation operator corresponding to particular choice of $x\in E$ and $V\in\bV$. For brevity, we use $\bE_{x}$ and $\bP_x$ in reference to the uncontrolled process dynamics.

Our aim is to maximize the long-run version of the risk sensitive criterion with negative (risk averse) parameter $\gamma<0$; see \cite{PitSte2019} for details. After risk aversion parameter standardization, our objective is to minimize the functional
\begin{equation}\label{eq:RSC_problem_min}
J(x,V):=\limsup_{T\to\infty} \frac{1}{T} \ln \E_{(x,V)} \left[\exp\left({\int_0^T f(X_s) ds + \sum_{i=1}^{\infty} 1_{\{\tau_i\leq T\}}  c(X_{\tau_i^-},\xi_i)}\right)\right],
\end{equation}
where $x\in E$ is the starting point, $f\colon E\to \bR$ and $c:\bE\times U\to \bR$ are the (normalized) reward and shift-cost functions, $V\in\bV$ is the control strategy, and $X_{\tau_i^-}$ is the state of the process right before the $i$th impulse, i.e. $X_{\tau_i^-}=x_{\tau_i}^{i-1}$.

Let $\mathcal{T}$ denote the family of almost surely finite stopping times taking values in $\bT$, and let $\mathcal{T}_{m}\subset \cT$ denote the dyadic stopping times defined on the time-grid $\{0,\delta_m, 2\delta_m, \ldots\}$, where $\delta_m:=(1/2)^m$ and $m\in\bN$. By $\bV_m\subset \bV$ we denote the family of impulse control strategies with stopping times restricted to $\mathcal{T}_m$.

In order to solve~\eqref{eq:RSC_problem_min}, we will show the existence of a function $w\in C(E)$ and a constant $\lambda\in \bR$ that satisfy Bellman equation
\begin{equation}\label{eq:opt_stop}
e^{w(x)}=\inf_{\tau}\E_x\left[e^{\int_0^{\tau} (f(X_s)-\lambda)ds+Mw(X_{\tau})}\right], \quad x\in E,
\end{equation}
where the operator $M\colon C(E)\to C(E)$ is defined as
\begin{equation}\label{eq:def_M}
M w(x):=\inf_{\xi \in U}\left(c(x,\xi)+w(\xi)\right).
\end{equation}
Of course, one needs additional assumptions imposed on $X$ and related functions in order to have a proper non-degenerate solution to~\eqref{eq:opt_stop}. Our approach is based on the dyadic approximations for the linked (discrete) stopping time problem; see \cite{PitSte2019}.

\newpage

The set of assumptions that are used throughout the paper is partly based on those introduced in \cite{PitSte2019} and \cite{SadSte2002}:

\medskip

\begin{enumerate}
\item[(\namedlabel{A1}{$\mathcal{A}$1})] (Reward function constraints.) The function $f:E\mapsto \mathbb{R}$ is continuous and bounded.
\item[(\namedlabel{A2}{$\mathcal{A}$2})] (Cost function constraints.) The function $c:E\times U\mapsto \mathbb{R}$ is continuous and bounded. Moreover, $c$ is bounded away from zero by $c_0>0$ and satisfies the triangle inequality, i.e.
\begin{equation}\label{eq:c_multiple_prevent}
c_0 \leq c(x,y)\leq c(x,z)+c(z,y),\quad x\in E,\, y,z\in U.
\end{equation}
Also, $c$ satisfies {\it uniform limit at infinity} condition, i.e.
\begin{equation}\label{assumpt:cost_function}
\lim_{\Vert x\Vert,\Vert y\Vert\to \infty}\sup_{\xi \in U}|c(x,\xi)-c(y,\xi)|=0.
\end{equation}

\item[(\namedlabel{A3}{$\mathcal{A}$3})] (Distance control.) For any compact set $\Gamma\subset E$, $t_0>0$, and $r_0>0$ we have
\begin{equation}\label{Co}
\lim_{r\to\infty}M_{\Gamma}(t_0,r)=0,\qquad \lim_{t\to 0} M_{\Gamma}(t,r_0) =0,
\end{equation}
where $M_{\Gamma}(t,r):= \sup_{x\in \Gamma} \mathbb{P}_x[\sup_{s\in [0,t]} \rho(X_s,x)\geq r]$.

\item[(\namedlabel{A4}{$\mathcal{A}$4})] (Mixing condition.) For any $t>0$ we have $\mathcal{E}_t(x)=\mathcal{E}_t(y)$, $x,y\in E$, and 
\[
\sup_{x,y\in E}\sup_{A \in\mathcal{E}_t(y)} \frac{\mathbb{P}_x[X_t\in A]}{\mathbb{P}_y[X_t\in A]} <\infty,
\]
where $\mathcal{E}_t(y):=\{A \in\mathcal{E}\colon \mathbb{P}_y[X_t\in A]>0\}$. Also, $U \in \mathcal{E}_t(x)$, for $t>0$ and $x\in E$.
\end{enumerate}

\medskip

Assumption~\eqref{A1} is a classic reward function constraint and directly reflects assumption (A1) in \cite{PitSte2019} for bounded weight function.

Assumption~\eqref{A2} imposes multiple restrictions on the cost function and partly reflects assumption (A2) in \cite{PitSte2019}. For completeness, let us provide exemplary class of functions which satisfy~\eqref{A2}. Let $h\colon \bR_+\to \bR_+$ be bounded, continuous, non-decreasing,
and satisfy subadditivity condition $h(x+y)\leq h(x)+h(y)$, $x,y\geq 0$. Then, the function $c(x,\xi):=h(\rho(x,\xi))+c_0$ satisfies~\eqref{A2}. For example, we can set $h_1(x):=x\wedge K$ (where $K\geq 0$ is fixed), $h_2(x)=\frac{x}{1+x}$, or $h_3(x):=\frac{1}{1+e^{-x}}$.

Assumption~\eqref{A3} facilitates distance control of the uncontrolled process. The property $\lim_{r\to\infty}M_{\Gamma}(t_0,r)=0$ implies that for any fixed time-point $t_0>0$ the process cannot get too far from the starting point, while property $\lim_{t\to 0}M_{\Gamma}(t,r_0)=0$ indicates that the process does not exhibit frequent sudden jumps on small time intervals. In particular, both properties are satisfied if the underlying space is compact or the process satisfies the $C_0$-Feller property; see \cite{JelPitSte2019a} and \cite{BasSte2018} for details.

Assumption~\eqref{A4} quantifies the ergodic properties of the underlying uncontrolled process $X$. Equivalently, we can state that for any $t>0$ there exists a probability measure $\nu_t:\mathcal{E}\to [0,1]$, density $p_t: E\times E\to \bR_{+}$ and constants $0<a_t\leq b_t<\infty$,  such that 
\begin{equation}\label{eq:A4.alt}
\bP_x(X_t\in A)=\int_A p_t(x,y)\nu_t(dy), \quad A\in \mathcal{E},
\end{equation}
with $\nu_t(U)>0$ and $a_t\leq p_t(x,y)\leq b_t$, for $x,y\in E$. In particular, this condition is satisfied for regular reflected diffusions; see Section~\ref{S:examples} for details. Also, it should be noted that~\eqref{A4} implies the global minorization property as well as the existence of a solution to the Multiplicative Poisson Equation (MPE) linked to~\eqref{eq:opt_stop}. 
\newpage
\noindent Namely, under~\eqref{A4}, we get the following two properties:

\medskip

\begin{enumerate}
\item[(\namedlabel{A4a}{$\mathcal{A}$4a})] (Global minorization.) For any $t>0$ there exists $a_t>0$ and a probability measure $\nu_t,$ such that $\nu_t(U)>0$ and $\inf_{x\in E} \mathbb{P}_x(X_{t}\in A)\geq a_t \nu_t(A)$, for $A\in \mathcal{E}$.
\item[(\namedlabel{A4b}{$\mathcal{A}$4b})]  (Existence of MPE solution.) There exists $v\in C(E)$ satisfying equation $v(x)=\ln\E_x[\exp(\int_0^t (f(X_s)-r(f)) ds +v(X_{t}))]$, for all $t\geq 0$ and $x\in E$, where $r(f):=\inf_{t>0}\frac{1}{t} \Vert H_t\Vert$ denotes the semi-group type, $\Vert \cdot \Vert$ is the supremum norm, and $H_t(x):=\ln\E_x[\exp(\int_0^t f(X_s) ds)]$.
\end{enumerate}

\medskip

\noindent We refer to \cite[Theorem 2.6]{DiMSte1999}, \cite[Corollary 2]{Ste1989}, and \cite[Lemma 3.2]{SadSte2002} for details. In fact, all the proofs presented in this paper are valid if we only assume global minorization and existence of MPE solution, i.e. replace~\eqref{A4}  with \eqref{A4a} and \eqref{A4b}. Nevertheless, we introduced stronger condition \eqref{A4} to simplify the narrative.

Before we proceed, let us provide a comment on the implications of assumptions \eqref{A1}--\eqref{A4}. First, note that assumption \eqref{A4} implies (A4) from \cite{PitSte2019} since (A4) becomes global minorization property in the bounded framework. Thus, noting that assumption (A3) from \cite{PitSte2019} is trivially satisfied due to boundedness of $f$ and $c$, we get that our setting is consistent with assumptions (A1)--(A4) introduced in \cite{PitSte2019}. Second,~\eqref{A4} allows us to apply change of measure technique that simplifies problem~\eqref{eq:opt_stop} by reducing it to the optimal stopping setting considered in \cite{JelPitSte2019a}. Indeed, let~\eqref{A4b} hold and, for $x\in E$, let $\mathbb{Q}_x$ denote a probability measure given via a Radon-Nikodym derivative
\begin{equation}\label{eq:measure_Q}
d \mathbb{Q}_x\big|_{t} := Y_t(x) d \mathbb{P}_x\big|_{t},\qquad t\geq 0,
\end{equation}
where $Y_t(x):=e^{-v(x)}e^{\int_0^t (f(X_s)-r(f)) ds +v(X_{t})}$, $t\geq 0$; note that $Y_t(x)$ is a martingale and $\E_x\left[ Y_t(x)\right]=1$, $x\in E$. Then, based on the identity
\begin{align}\label{rm:changed_measure}
\E_x  \left[e^{\int_0^{\tau} (f(X_s)-\lambda)ds+G(X_{\tau})}\right]& =e^{v(x)} \E_x^{\mathbb{Q}}\left[e^{(r(f)-\lambda)\tau+G(X_{\tau})-v(X_\tau)}\right],
\end{align}
where $x \in E$, $\lambda\in \bR$, $G\in C(E)$, $\tau\in \mathcal{T}$, and $\E_x^{\mathbb{Q}}$ denotes expectation with respect to $\mathbb{Q}_x$, we are able to reduce $(f(\cdot)-\lambda)$ to a constant in~\eqref{eq:opt_stop}. Namely, taking into account \eqref{rm:changed_measure}, Bellman equation~\eqref{eq:opt_stop} could be rewritten as
\begin{equation}\label{eq:optstop_Q}
e^{w(x)-v(x)}=\inf_{\tau}\E_x^{\mathbb{Q}}\left[e^{(r(f)-\lambda)\tau+Mw(X_{\tau})-v(X_\tau)}\right], \quad x\in E.
\end{equation}
Now, if $\lambda<r(f)$, then \eqref{eq:optstop_Q} falls within the class of optimal stopping problems that were recently studied in~\cite{JelPitSte2019a}; see Appendix~\ref{S:stopping} for details. The remaining degenerate case $\lambda=r(f)$ (we will show that $\lambda\leq r(f)$) requires special tools and corresponds to the situation when the \textit{no impulse} strategy is optimal; see Theorem~\ref{th:opt_strategy_continuous} for details.

\section{Dyadic impulse control}\label{S:discrete_control}
In this section we outline the properties of the dyadic version of the optimal impulse control problem~\eqref{eq:RSC_problem_min}. Recall that for any $m\in\bN$ and time-step $\delta_m=\tfrac{1}{2^m}$, the corresponding family of dyadic stopping times taking values in $\{0,\delta_m, 2\delta_m, \ldots\}$ is denoted by $\mathcal{T}_{m}$. For any $m\in\bN$, the dyadic version of Bellman equation~\eqref{eq:opt_stop} is given by
\begin{equation}\label{eq:opt_stop_discrete}
e^{w_{m}(x)}=\inf_{\tau\in \mathcal{T}_{m}}\E_x\left[e^{\int_0^{\tau} (f(X_s)-\lambda_{m})ds+Mw_{m}(x_{\tau})}\right],\quad  x\in E,
\end{equation}
where $M$ is defined in \eqref{eq:def_M}, $w_m\in C(E)$ and $\lambda_m\in\bR$. In fact, due to the dyadic nature of the problem, one could consider the associated one-step equation given by
\begin{equation}\label{eq:fixed_point_Bellman}
e^{w_m(x)}=\min\left(\E_x\left[e^{\int_0^{\delta_m}(f(X_s)-\lambda_m)ds+w_m(X_{\delta_m})}\right],e^{Mw_m(x)}\right), \quad x\in E.
\end{equation}
Under suitable assumptions, solution to~\eqref{eq:fixed_point_Bellman} exists and solves the dyadic version of the optimal control problem, i.e.
\begin{equation}\label{eq:dyadic:opt}
\inf_{V\in\bV_m}J(x,V),
\end{equation}
where $\bV_m$ is the set of admissible control strategies on the time-grid spanned by $\delta_m$.
\begin{theorem}\label{th:PS_existence}
Under Assumptions~\eqref{A1}--\eqref{A4}, for any $m\in \bN$, there exists a unique (up to an additive constant) function $w_m\in C(E)$ and a constant $\lambda_m\in \mathbb{R}$ satisfying~\eqref{eq:fixed_point_Bellman}. Moreover, $\lambda_m$ is the optimal value of the problem~\eqref{eq:dyadic:opt}.
\end{theorem}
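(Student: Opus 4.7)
The plan is to recast the fixed-point Bellman equation~\eqref{eq:fixed_point_Bellman} as a fixed-point problem for a single nonlinear Bellman operator on $C(E)$ and then exploit the minorization built into assumption~\eqref{A4} via a \emph{span contraction} argument. Concretely, I would define
\[
T w(x) := \ln\min\!\left(\E_x\!\left[e^{\int_0^{\delta_m} f(X_s)\,ds + w(X_{\delta_m})}\right],\; e^{Mw(x)}\right),\qquad x\in E,
\]
and first verify that $T$ maps $C(E)$ into itself: the inner expectation is continuous in $x$ by the $C$-Feller property and boundedness of $f$ from~\eqref{A1}, the operator $M$ preserves $C(E)$ by a standard compactness argument using continuity of $c$ and compactness of $U$ from~\eqref{A2}, and the minimum of two continuous functions is continuous. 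Finding $(w_m,\lambda_m)$ solving~\eqref{eq:fixed_point_Bellman} is then equivalent to finding $w_m\in C(E)$ with $T w_m - w_m \equiv \lambda_m\delta_m$.

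To produce such a $w_m$, I would pass to the quotient space $C(E)/\bR$ equipped with the span seminorm $\|g\|_{sp}:=\sup g-\inf g$. The key estimate is that, thanks to the global minorization~\eqref{A4a} derived from~\eqref{A4}, the linear Feynman--Kac map $g\mapsto \ln \E_\cdot[e^{\int_0^{\delta_m} f(X_s)\,ds + g(X_{\delta_m})}]$ is a strict span contraction with constant $\alpha<1$ depending only on $\delta_m$ and on the lower density bound $a_{\delta_m}$ from~\eqref{eq:A4.alt} -- this is the standard Doeblin/Birkhoff argument. Since $M$ is a non-expansion in supremum norm (and therefore in span), and since the pointwise minimum of two span non-expansive maps remains span non-expansive, the full operator $T$ inherits span contraction with the same rate $\alpha$. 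A Banach fixed-point argument in $C(E)/\bR$ then yields a unique (up to an additive constant) $w_m\in C(E)$, and $\lambda_m$ is identified as the constant value of $(T w_m - w_m)/\delta_m$. In practice this is exactly the program carried out in~\cite{PitSte2019}, so the clean way to proceed is to check that~\eqref{A1}--\eqref{A4} of the present paper imply the hypotheses of that reference -- a consistency check already partly discussed in the paragraph following the assumptions -- and then quote the theorem.

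To identify $\lambda_m$ with the dyadic optimal value~\eqref{eq:dyadic:opt}, I would run a standard verification argument. The $\min$-structure of~\eqref{eq:fixed_point_Bellman} naturally suggests the candidate strategy $V^{\star}$ that impulses at the first dyadic time at which the impulse branch $e^{Mw_m}$ is no larger than the continuation branch, with the shift chosen as an (approximate) minimizer of $\xi\mapsto c(x,\xi)+w_m(\xi)$ (such a minimizer exists by continuity of $c$ and compactness of $U$); iterating~\eqref{eq:fixed_point_Bellman} along this policy turns the inequality into an equality and gives $J(x,V^{\star})=\lambda_m$. For an arbitrary $V\in\bV_m$, the fact that the left-hand side of~\eqref{eq:fixed_point_Bellman} is dominated by each of the two branches, together with a straightforward induction, yields $w_m(x)+n\lambda_m\delta_m\le \ln \E_{(x,V)}[\cdots]$, where the bracket is the finite-horizon risk sensitive functional up to $n\delta_m$; dividing by $n\delta_m$, using boundedness of $w_m$, and letting $n\to\infty$ gives $J(x,V)\ge \lambda_m$.

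The delicate point is the passage to the limit in the verification step, because one must rule out degenerate behaviour stemming from accumulation of impulses in bounded time. Assumption~\eqref{A2} is tailored to this: the strict positive lower bound $c_0$ on $c$ penalises each impulse by at least $e^{c_0}$ inside the exponential, so accumulation makes $J(x,V)=+\infty$ and is harmless for the infimum, while the uniform boundedness of $f$ and $c$ keeps the relevant exponential integrands bounded, allowing the limit to be taken by a routine dominated convergence argument.
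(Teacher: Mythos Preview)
Your span-contraction sketch contains a genuine gap. You correctly note that the Feynman--Kac branch $w\mapsto \ln \E_\cdot[e^{\int_0^{\delta_m} f(X_s)\,ds + w(X_{\delta_m})}]$ is a strict span contraction with some rate $\alpha<1$, and that $M$ is span non-expansive. But from ``the pointwise minimum of two span non-expansive maps remains span non-expansive'' you then conclude that ``the full operator $T$ inherits span contraction with the same rate $\alpha$''. That does not follow: the minimum of an $\alpha$-contraction and a merely non-expansive map is a priori only non-expansive. On the part of $C(E)$ where the impulse branch $e^{Mw}$ is active, $T$ behaves like $M$, and $M$ carries no strict contraction by itself. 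Banach's fixed-point theorem in $C(E)/\bR$ is therefore unavailable for your $T$ as written.

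The paper sidesteps exactly this difficulty, and the mechanism is instructive. The result imported from~\cite{PitSte2019} does \emph{not} deliver a solution of~\eqref{eq:fixed_point_Bellman} directly; it delivers a solution of the different equation
\[
e^{w_m(x)}=\min\!\Big(\E_x\big[e^{\int_0^{\delta_m}(f(X_s)-\lambda_m)ds+w_m(X_{\delta_m})}\big],\ \inf_{\xi\in U} e^{c(x,\xi)}\,\E_{\xi}\big[e^{\int_0^{\delta_m}(f(X_s)-\lambda_m)ds+w_m(X_{\delta_m})}\big]\Big),
\]
in which \emph{both} branches factor through the one-step Feynman--Kac transition. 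For that operator the Birkhoff/Doeblin argument is legitimate, because every ``action'' (wait, or shift to $\xi$ and then wait) is composed with the same $\delta_m$-transition governed by the minorisation in~\eqref{A4}, so the infimum inherits the common rate. The paper then proves, as a separate step, that the second branch above coincides with $e^{Mw_m(x)}$; this identity is not automatic and hinges on the triangle inequality $c(x,\zeta)\le c(x,\xi)+c(\xi,\zeta)$ from~\eqref{A2}. Hence ``quote the theorem'' only gets you to the displayed equation above, and the passage to~\eqref{eq:fixed_point_Bellman} is precisely the ingredient your plan is missing. Your verification paragraph (optimality and lower bound for $\lambda_m$) is fine in outline and is what the paper cites from~\cite{PitSte2019}.
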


\begin{proof}
The proof relies on the results presented in~\cite{PitSte2019}; it should be noted that while in~\cite[Equation 2.5]{PitSte2019} the maximization problem is considered, it directly corresponds to standardized problem~\eqref{eq:dyadic:opt} due to negative sign of the risk aversion parameter in the entropic utility measure. Also, note that in our setting all assumptions listed in~\cite[Section 2]{PitSte2019} are satisfied. Please recall that~\cite[Assumption A.3]{PitSte2019} is naturally satisfied in the bounded framework, while other assumptions have a direct link to assumptions stated in this paper; see Section~\ref{S:preliminaries} for details.

First, from \cite[Proposition 3.4]{PitSte2019} we get that there exists a unique (up to an additive constant) function $w_m\in C(E)$ and a constant $\lambda_m\in \mathbb{R}$ satisfying equation
\begin{multline}\label{eq:th:PS:fixed_point}
e^{w_m(x)}=\min\left(\E_x\left[e^{\int_0^{\delta_m}(f(X_s)-\lambda_m)ds+w_m(X_{\delta_m})}\right]\right.,\\
\left.\inf_{\xi \in U} e^{c(x,\xi)} \E_{\xi}\left[e^{\int_0^{\delta_m}(f(X_s)-\lambda_m)ds+w_m(X_{\delta_m})}\right]\right).
\end{multline}
Second, let us show that
\begin{equation}\label{eq:dyadic1}
\inf_{\xi \in U} e^{c(x,\xi)} \E_{\xi}\left[e^{\int_0^{\delta_m}(f(X_s)-\lambda_m)ds+w_m(X_{\delta_m})}\right]=e^{Mw_m(x)}.
\end{equation}
For brevity, we define
\[
Z(x):=\E_x\left[e^{\int_0^{\delta_m}(f(X_s)-\lambda_m)ds+w_m(X_{\delta_m})}\right], \quad x\in E.
\]
From~\eqref{eq:th:PS:fixed_point}, we get
$
Z(x) \geq e^{w_m(x)}
$
for any $x\in E.$ Therefore, for any $x\in E$, we get
\begin{equation}\label{cor:fixed_point_simple:eq1}
\inf_{\xi \in U} e^{c(x,\xi)} Z(\xi)\geq\inf_{\xi \in U} e^{c(x,\xi)} e^{w_m(\xi)}.
\end{equation}
On the other hand, due to~\eqref{eq:c_multiple_prevent} and~\eqref{eq:th:PS:fixed_point}, we get
\begin{align*}
\inf_{\xi \in U} e^{c(x,\xi)} Z(\xi) &= \inf_{\xi \in U} e^{c(x,\xi)}Z(\xi)\wedge \inf_{\zeta \in U} e^{c(x,\zeta)} Z(\zeta)\\
& \leq \inf_{\xi \in U} \left(e^{c(x,\xi)}Z(\xi)\wedge \inf_{\zeta \in U} e^{c(x,\xi)} e^{c(\xi,\zeta)} Z(\zeta)\right)\\
& =\inf_{\xi \in U} e^{c(x,\xi)} \left(Z(\xi)\wedge \inf_{\zeta \in U} e^{c(\xi,\zeta)} Z(\zeta)\right)\\
& = \inf_{\xi \in U} e^{c(x,\xi)} e^{w_m(\xi)},
\end{align*}
which proves~\eqref{eq:dyadic1}. Combining~\eqref{eq:th:PS:fixed_point} and~\eqref{eq:dyadic1} we get that $\lambda_m$ and $w_m$ are solutions to~\eqref{eq:fixed_point_Bellman}.

Finally, using \cite[Proposition 4.3]{PitSte2019} we can link the constant $\lambda_m$ from~\eqref{eq:fixed_point_Bellman} with the optimal value of the dyadic control problem~\eqref{eq:dyadic:opt}, which concludes the proof.
\end{proof}

Now, we link the solution of~\eqref{eq:fixed_point_Bellman} to \eqref{eq:opt_stop_discrete}. The link is related to the type of the semigroup
\[
r(f)=\inf_{t>0}\frac{1}{t} \Vert H_t\Vert,
\]
where $H_t(x)=\ln\E_x[\exp(\int_0^t f(X_s) ds)]$, $x\in E$; cf. Property \eqref{A4b}.
\begin{proposition}\label{pr:r(f)_cases}
For any $m\in \mathbb{N}$ we get $\lambda_m\leq r(f)$. Moreover,
\begin{enumerate}
\item if $\lambda_m=r(f)$, then the {\it no impulse} strategy is optimal for $\mathbb{V}_m$;
\item if $\lambda_m<r(f)$, then $w_m$ defined in~\eqref{eq:fixed_point_Bellman} satisfies~\eqref{eq:opt_stop_discrete}.
\end{enumerate}
\end{proposition}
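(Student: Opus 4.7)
My plan is to prove the three claims in sequence. First, to establish $\lambda_m \le r(f)$ and with it claim~(1), I evaluate the objective on the \emph{no-impulse} strategy $V_0 \in \mathbb{V}_m$. The MPE identity~\eqref{A4b} combined with boundedness of $v$ gives
\[
r(f) - \tfrac{2\|v\|}{T} \le \tfrac{1}{T}\ln \E_x\!\left[e^{\int_0^T f(X_s)\,ds}\right] \le r(f) + \tfrac{2\|v\|}{T},
\]
so $J(x, V_0) = r(f)$. Since Theorem~\ref{th:PS_existence} characterizes $\lambda_m$ as $\inf_{V \in \mathbb{V}_m} J(x, V)$, we obtain $\lambda_m \le r(f)$; under equality, $V_0$ attains the optimum, which is claim~(1).

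For claim~(2), assume $\lambda_m < r(f)$ and set $\alpha := r(f) - \lambda_m > 0$, $U := e^{w_m - v}$, and $G := e^{Mw_m - v}$. The change of measure~\eqref{eq:measure_Q} combined with~\eqref{rm:changed_measure} turns~\eqref{eq:fixed_point_Bellman} into
\[
U(x) = \min\!\bigl(e^{\alpha\delta_m}\,\E^{\mathbb{Q}}_x[U(X_{\delta_m})],\; G(x)\bigr),
\]
and recasts~\eqref{eq:opt_stop_discrete} as $U(x) = \inf_{\tau \in \mathcal{T}_m}\E^{\mathbb{Q}}_x[e^{\alpha\tau}G(X_\tau)]$. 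The inequality $U(x) \le e^{\alpha\delta_m}\E^{\mathbb{Q}}_x[U(X_{\delta_m})]$ makes $M_n := e^{\alpha n\delta_m} U(X_{n\delta_m})$ a $\mathbb{Q}_x$-submartingale; applying optional sampling at $\tau \wedge N\delta_m$ for any $\tau \in \mathcal{T}_m$ and then letting $N \to \infty$ (using boundedness of $G$, Fatou's lemma, and $\tau < \infty$ a.s.) yields the upper bound $U(x) \le \inf_{\tau \in \mathcal{T}_m}\E^{\mathbb{Q}}_x[e^{\alpha\tau}G(X_\tau)]$.

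For the reverse inequality I introduce the candidate stopping time $\tau^{\star} := \inf\{k\delta_m : k \ge 0,\, U(X_{k\delta_m}) = G(X_{k\delta_m})\}$. On $\{U < G\}$ the one-step equation becomes an equality $U(x) = e^{\alpha\delta_m}\E^{\mathbb{Q}}_x[U(X_{\delta_m})]$, so $(M_{n\wedge\tau^\star})_n$ is a $\mathbb{Q}_x$-martingale; optional sampling combined with $U(X_{\tau^\star}) = G(X_{\tau^\star})$ gives the identity
\[
U(x) = \E^{\mathbb{Q}}_x\!\left[1_{\{\tau^\star \le n\delta_m\}} e^{\alpha\tau^\star} G(X_{\tau^\star})\right] + R_n,\quad R_n := \E^{\mathbb{Q}}_x\!\left[1_{\{\tau^\star > n\delta_m\}} e^{\alpha n\delta_m} U(X_{n\delta_m})\right],
\]
with $(R_n)$ non-increasing in $n$. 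The main obstacle---and the only point where the strict inequality $\alpha > 0$ is crucial---is showing $R_n \to 0$. Since $U \ge e^{-\|w_m - v\|}$, one has $R_n \ge e^{-\|w_m - v\|}\, e^{\alpha n\delta_m}\,\mathbb{Q}_x(\tau^\star > n\delta_m)$, and the bound $R_n \le U(x) < \infty$ combined with $e^{\alpha n\delta_m}\to\infty$ forces $\mathbb{Q}_x(\tau^\star = \infty) = 0$. Boundedness of the Radon--Nikodym density in~\eqref{eq:measure_Q} on every $\mathcal{F}_{n\delta_m}$ promotes this to $\mathbb{P}_x(\tau^\star < \infty) = 1$, so $\tau^\star \in \mathcal{T}_m$; monotone convergence on the first summand, combined with the upper bound applied at $\tau = \tau^\star$, then forces $R_\infty = 0$. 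Hence $U(x) = \E^{\mathbb{Q}}_x[e^{\alpha\tau^\star}G(X_{\tau^\star})] = \inf_{\tau \in \mathcal{T}_m}\E^{\mathbb{Q}}_x[e^{\alpha\tau}G(X_\tau)]$, and translating back via~\eqref{rm:changed_measure} yields~\eqref{eq:opt_stop_discrete}.
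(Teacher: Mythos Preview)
Your overall strategy matches the paper's: use the MPE change of measure~\eqref{eq:measure_Q} to replace $f(\cdot)-\lambda_m$ by the constant $\alpha=r(f)-\lambda_m>0$, and then solve the resulting discrete optimal stopping problem. The paper at that point simply invokes Proposition~\ref{pr:stopping_Bellman} (after shifting $w_m,v$ by constants so that $u_m\geq 1$ and $G_m\geq 0$), whereas you reprove that proposition inline via a submartingale/martingale argument. Your treatment of part~(1) is also slightly different: you use the MPE identity to obtain the \emph{equality} $J(x,V_0)=r(f)$, while the paper only needs the inequality $J(x,V_0)\leq r(f)$ coming from $H_T(x)\leq\|H_T\|$ and $r(f)=\lim_{T\to\infty}\tfrac{1}{T}\|H_T\|$.

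There is, however, a genuine gap in your inline argument. You assert that ``boundedness of the Radon--Nikodym density in~\eqref{eq:measure_Q} on every $\mathcal{F}_{n\delta_m}$ promotes $\mathbb{Q}_x(\tau^\star<\infty)=1$ to $\mathbb{P}_x(\tau^\star<\infty)=1$.'' This does not follow: the density $Y_{n\delta_m}$ is bounded above and below only by constants of order $e^{\pm n\delta_m(\|f\|+|r(f)|)}$, so equivalence of $\mathbb{P}_x$ and $\mathbb{Q}_x$ on each $\mathcal{F}_{n\delta_m}$ does not transfer to the tail event $\{\tau^\star=\infty\}\in\mathcal{F}_\infty$. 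In particular, $\mathbb{Q}_x(\tau^\star>n\delta_m)\to 0$ does not by itself force $\mathbb{P}_x(\tau^\star>n\delta_m)\to 0$.

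The fix is easy and does not require $\mathbb{P}_x$-finiteness of $\tau^\star$ at all. Once you know $\mathbb{Q}_x(\tau^\star<\infty)=1$, the martingale identity $U(x)=\E^{\mathbb{Q}}_x[M_{n\wedge\sigma^\star}]$ and the domination $M_{n\wedge\sigma^\star}\leq e^{\alpha\tau^\star}\|U\|$ (with $\E^{\mathbb{Q}}_x[e^{\alpha\tau^\star}]<\infty$, which follows from the bound $\E^{\mathbb{Q}}_x[1_{\{\tau^\star\leq n\delta_m\}}e^{\alpha\tau^\star}G(X_{\tau^\star})]\leq U(x)$ and $\inf G>0$) give $R_\infty=0$ directly by dominated convergence. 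Then the bounded stopping times $\tau_N:=\tau^\star\wedge N\delta_m\in\mathcal{T}_m$ satisfy $\E^{\mathbb{Q}}_x[e^{\alpha\tau_N}G(X_{\tau_N})]\to U(x)$, since the ``remainder'' term is bounded by $\tfrac{\|G\|}{\inf U}R_N\to 0$. This yields $\inf_{\tau\in\mathcal{T}_m}\E^{\mathbb{Q}}_x[e^{\alpha\tau}G(X_\tau)]\leq U(x)$ without ever asserting $\tau^\star\in\mathcal{T}_m$. (A minor related point: in the upper-bound direction, the passage to the limit is dominated convergence when $\E^{\mathbb{Q}}_x[e^{\alpha\tau}]<\infty$ and trivial otherwise; ``Fatou'' is not the right lemma there.)
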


\begin{proof}
Let us fix $m\in\bN$. First, we show that for any $x\in E$, we get $\lambda_m\leq r(f)$. Using \cite[Proposition 1]{Ste1989}, we get that $r(f)$ may be rewritten as $r(f)=\lim_{T\to\infty}\tfrac{1}{T} \| H_T\|$. Thus, for any $x\in E$ and the {\it no impulse} strategy $V_0\in\bV$, we get
\[
J(x,V_0)=\limsup_{T\to\infty}\frac{1}{T}\ln  H_T(x)\leq \limsup_{T\to\infty}\frac{1}{T}\sup_{x\in E} H_T(x)= r(f).
\]
Also, using Theorem~\ref{th:PS_existence}, we know that for $\lambda_m$ we get
\begin{equation}\label{eq:r(g)_bound}
\lambda_m=\inf_{V\in\bV_m}J(x,V) \leq J(x, V_0)\leq r(f),
\end{equation}
which concludes the proof of $\lambda_m\leq r(f)$.

Now, let $\lambda_m=r(f)$. From~\eqref{eq:r(g)_bound} we get $r(f)=J(x,V_0)$. This implies optimality of the {\it no impulse} strategy.

Next, let $\lambda_m<r(f)$. To show that $w_m$ defined in~\eqref{eq:fixed_point_Bellman} satisfies~\eqref{eq:opt_stop_discrete} we use the change of measure technique based on MPE. In this way we can replace the term $(f(\cdot)-\lambda_m)$ in~\eqref{eq:fixed_point_Bellman} by some positive constant and use the results from~\cite[Section 3]{JelPitSte2019a}; see also Appendix~\ref{S:stopping}. Recalling $v$ from~\eqref{A4b}, $w_m$ from~\eqref{eq:fixed_point_Bellman}, and $\mathbb{Q}_x$ from~\eqref{eq:measure_Q}, we set
\begin{align}\label{eq:def_aux1}
u_m(x)& :=w_m(x)-v(x)+\Vert w_m \Vert +\Vert v\Vert+\Vert Mw_m \Vert,\nonumber\\
G_m(x)& :=Mw_m(x)-v(x)+\Vert w_m \Vert +\Vert v\Vert+\Vert Mw_m \Vert,\nonumber\\
S^m h(x)& :=\E_x^{\mathbb{Q}}\left[e^{(r(f)-\lambda_m)\delta_m} h(X_{\delta_m})\right]\wedge e^{G_m(x)}, \quad h \in C(E).
\end{align}
Using~\eqref{rm:changed_measure} and~\eqref{eq:fixed_point_Bellman} we get $S^m e^{u_m} = e^{u_m}$. Also, noting that $G_m\in C(E), G_m(\cdot)\geq 0,$ $r(f)-\lambda_m>0$, and using Proposition~\ref{pr:stopping_Bellman}, we get
\[
e^{u_m(x)}=\inf_{\tau\in \mathcal{T}_m}\E_x^{\mathbb{Q}}\left[e^{\tau(r(f)-\lambda_m)+G_m(X_\tau)}\right].
\]
Recalling~\eqref{rm:changed_measure} and~\eqref{eq:def_aux1} we conclude that $w_m$ satisfies~\eqref{eq:opt_stop_discrete}.
\end{proof}



\section{Continuous time impulse control}\label{S:continuous_control}

In this section we study the impulse control problem~\eqref{eq:RSC_problem_min} and linked Bellman equation~\eqref{eq:opt_stop}. Our approach is based on the approximation of Bellman equation by its dyadic version introduced in Section~\ref{S:discrete_control}. 

Let $(\lambda_m)$, $m\in\bN$, be a sequence of solutions to dyadic Bellman equations \eqref{eq:opt_stop_discrete}; from Theorem~\ref{th:PS_existence} we know that this sequence exists. Since $(\lambda_m)$ is decreasing and  $\lambda_m\geq -\Vert f\Vert$, we can define the finite limit
\begin{equation}\label{eq:lambda}
\lambda:=\lim_{m\to\infty} \lambda_m.
\end{equation}
From Proposition~\ref{pr:r(f)_cases} we know that $\lambda\leq r(f)$. In Theorem~\ref{th:existence_Bellman_continuous}, we show that if $\lambda<r(f)$, then there exists a solution to~\eqref{eq:opt_stop}.
\begin{theorem}\label{th:existence_Bellman_continuous}
Assume~\eqref{A1}--\eqref{A4} and let $\lambda$ be given by~\eqref{eq:lambda}. If $\lambda<r(f)$, then there exists a function $w\in C(E)$, such that $w$ and $\lambda$ are solutions to~\eqref{eq:opt_stop}.
\end{theorem}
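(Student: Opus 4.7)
The strategy is to construct $w$ as a uniform-on-compacts limit of the normalized dyadic value functions $w_m$ from Theorem~\ref{th:PS_existence}. Since $\lambda<r(f)$ and $\lambda_m\downarrow\lambda$, there exists $m_0$ with $\lambda_m<r(f)$ for all $m\geq m_0$, so by Proposition~\ref{pr:r(f)_cases}(ii) each such $w_m$ satisfies the stopping-problem form \eqref{eq:opt_stop_discrete}. Applying the change of measure \eqref{eq:measure_Q}--\eqref{eq:optstop_Q} recasts this equation as
\[
e^{u_m(x)}=\inf_{\tau\in\mathcal{T}_m}\bE_x^{\mathbb{Q}}\!\left[e^{(r(f)-\lambda_m)\tau+G_m(X_\tau)}\right],
\]
with $u_m, G_m$ defined analogously to those in \eqref{eq:def_aux1}. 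Crucially, $r(f)-\lambda_m\geq r(f)-\lambda_{m_0}>0$ uniformly in $m\geq m_0$, placing every $u_m$ inside the framework of Appendix~\ref{S:stopping}.

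Next, normalize (say $w_m(\overline{x})=0$) and establish that $(w_m)$ is uniformly bounded and equicontinuous on compact subsets of $E$. Uniform boundedness follows from the $L^\infty$ bounds on $f$, $c$, $v$ supplied by \eqref{A1}, \eqref{A2}, \eqref{A4b} together with the stopping representation above; equicontinuity on compacts follows from the regularity results for risk-sensitive optimal stopping value functions summarized in Appendix~\ref{S:stopping}, applied uniformly in $m$, supplemented by the distance-control assumption \eqref{A3}. Arzel\`a--Ascoli then yields a subsequence $w_{m_k}\to w\in C(E)$ uniformly on compacts; continuity of $c$ and compactness of $U$ upgrade this to $Mw_{m_k}\to Mw$ uniformly on compacts.

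It remains to pass to the limit in \eqref{eq:opt_stop_discrete}. For the $(\leq)$ direction, fix $\tau\in\bigcup_m\mathcal{T}_m$: then $\tau\in\mathcal{T}_{m_k}$ eventually, and bounded convergence gives $e^{w(x)}\leq\bE_x[e^{\int_0^\tau(f(X_s)-\lambda)\,ds+Mw(X_\tau)}]$; extension to arbitrary $\tau\in\mathcal{T}$ uses the dyadic rounding $\tau^{(m)}:=\lceil\tau/\delta_m\rceil\delta_m\downarrow\tau$ combined with right-continuity of $X$ and \eqref{A3}. For the $(\geq)$ direction, pick $\tau^{\ast}_{m_k}\in\mathcal{T}_{m_k}$ realizing the dyadic infimum $e^{u_{m_k}(x)}$ up to error $1/k$; the uniform positivity of $r(f)-\lambda_m$ in the $\mathbb{Q}$-formulation forces $(\tau^{\ast}_{m_k})$ to be tight, and passing to the limit on a further subsequence produces a stopping time whose cost matches $e^{w(x)}$. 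Inverting the change of measure then yields \eqref{eq:opt_stop}.

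The main obstacle is the $(\geq)$ passage: extracting a meaningful subsequential limit of near-optimal dyadic stopping rules requires uniform tightness, which is precisely where the strict inequality $\lambda<r(f)$ is essential---it produces the exponential penalty on large $\tau$ in the $\mathbb{Q}$-formulation. A secondary delicate point is proving $m$-uniform equicontinuity of $(w_m)$ on compacts: one must show that the regularity modulus provided by Appendix~\ref{S:stopping} does not degrade as $m\to\infty$, despite the varying inputs $(r(f)-\lambda_m, G_m)$.
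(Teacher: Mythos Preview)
Your broad strategy---extract a subsequential limit of the dyadic solutions $w_m$ and pass to the limit in~\eqref{eq:opt_stop_discrete}---matches the paper's, but the execution diverges at two places, and both of your ``delicate points'' are in fact unresolved gaps.

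\medskip

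\textbf{Equicontinuity of $(w_m)$ versus $(Mw_m)$.} You propose to obtain $m$-uniform equicontinuity of $(w_m)$ from the regularity statements in Appendix~\ref{S:stopping}. But that appendix contains no such uniform result: Theorem~\ref{th:stopping_continuity} gives continuity of a \emph{single} value function, and Theorem~\ref{th:stopping_stability} requires you to already know the uniform limit $G$ of the terminal costs $G_m$---which is precisely what you are trying to extract. The paper sidesteps this circularity by applying Arzel\`a--Ascoli not to $(w_m)$ but to $(Mw_m)$: from the definition of $M$ one has
\[
|Mw_m(x)-Mw_m(y)|\le\sup_{\xi\in U}|c(x,\xi)-c(y,\xi)|,
\]
so equicontinuity is immediate from continuity of $c$, with no dependence on $m$ at all. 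After normalizing so that $\inf_{\xi\in U}w_m(\xi)=0$, one also gets $0\le Mw_m\le 2\|c\|$. A diagonal argument plus the uniform-limit-at-infinity condition~\eqref{assumpt:cost_function} then yields $Mw_{m_n}\to\phi$ \emph{globally} uniformly (not just on compacts), for some $\phi\in C(E)$. Only after $\phi$ is in hand does the paper \emph{define} $w$ by
\[
e^{w(x)}:=\inf_{\tau\in\mathcal T}\bE_x\!\left[e^{\int_0^\tau(f(X_s)-\lambda)\,ds+\phi(X_\tau)}\right],
\]
and then invoke Theorem~\ref{th:stopping_stability} (with $G_m=Mw_m-v+\text{const}$, $G=\phi-v+\text{const}$) to conclude $w_m\to w$ on compacts and hence $Mw_m\to Mw$, forcing $\phi=Mw$.

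\medskip

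\textbf{The $(\geq)$ direction.} Your plan is to select near-optimal $\tau^*_{m_k}\in\mathcal T_{m_k}$, use the exponential penalty $r(f)-\lambda_m\ge r(f)-\lambda_{m_0}>0$ to get tightness of $(\tau^*_{m_k})$, and then ``pass to the limit on a further subsequence'' to produce a stopping time realising the continuous infimum. Tightness as real-valued random variables is fine, but it does not furnish a limiting \emph{stopping time} for the original filtration, nor does distributional convergence of $\tau^*_{m_k}$ control $\bE_x[\,\cdot\,]$ along the sequence in the way you need. This step, as written, does not go through. The paper avoids the issue entirely: Theorem~\ref{th:stopping_stability} is a black-box convergence result $u_m\to u$ for the value functions themselves (proved in~\cite{JelPitSte2019a}), so no compactness argument on stopping times is ever needed.

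\medskip

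In short, the paper's two structural choices---compactness on $(Mw_m)$ rather than $(w_m)$, and invoking the stability theorem rather than limiting near-optimal stopping times---are exactly what make the argument close; your version leaves both steps open.
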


\begin{proof}
For transparency, we split the proof into two steps: (1) proof of the fact that $(Mw_{m_n})\to \phi$ uniformly to some $\phi\in C(E)$, along a subsequence $(m_n)$; (2) proof of the identity $\phi=Mw$ for a suitable $w$. In the end, we comment how to combine those steps to get \eqref{eq:opt_stop} and conclude the proof.

\noindent {\it Step 1.} Let $(w_m)$ be the sequence of functions $w_m\in C(E)$ given by
\begin{equation}\label{eq:def.w.m}
w_m(x):=\widetilde{w}_m(x)-\inf_{\xi\in U}\widetilde{w}_m(\xi),
\end{equation}
where $\widetilde{w}_m$, $m\in\bN$, is a solution to the Bellman equation~\eqref{eq:fixed_point_Bellman}; note that $\widetilde{w}_m$ exists due to Theorem~\ref{th:PS_existence} and $(w_m)$ also satisfies~\eqref{eq:fixed_point_Bellman}. Now, we show that using Arzel\`a-Ascoli Theorem one can choose uniformly convergent subsequence of $(Mw_m)$, where $M$ is given in~\eqref{eq:def_M}.

First, we show that $(Mw_m)$ is uniformly bounded on $E$. For any $m\in\bN$ and $\xi\in U$, we get $w_m(\xi)\geq 0$. Consequently, for $x\in E$, we get $Mw_m(x)\geq 0$, i.e. we found the uniform lower bound for $(Mw_m)$. On the other hand, recalling \eqref{eq:fixed_point_Bellman} and \eqref{eq:def.w.m}, for any $x\in E$ and $m\in \bN$, we get
\[
w_{m}(x)\leq M \widetilde w_{m}(x) -\inf_{\xi\in U}\widetilde{w}_m(\xi)\leq \left(\Vert c\Vert+\inf_{\xi\in U}\widetilde{w}_m(\xi)\right)-\inf_{\xi\in U}\widetilde{w}_m(\xi)= \Vert c\Vert,
\]
and consequently $Mw_m(x)\leq 2\Vert c\Vert$.

Second, equicontinuity of $(Mw_m)$ follows directly from the inequality
\begin{equation}\label{eq:equicontinuity}
|M w_{m}(x)-Mw_{m}(y)|\leq \sup_{\xi\in U}|c(x,\xi)-c(y,\xi)|, \quad x,y\in E, m\in \bN
\end{equation}
and continuity of the shift-cost function $c$. Thus, we can use Arzel\`a-Ascoli Theorem. Namely, for any $N\in\bN$ and compact set $B(N):=\{x\in E\colon \| x\|\leq N\}$,
we can find a subsequence of $(Mw_m)$, say $(Mw_{m^N_n})$, and $\phi_N\in C(E)$ such that
\begin{equation}\label{eq:diagonal2}
Mw_{m^N_n}\to \phi_N,\quad n\to\infty,
\end{equation}
uniformly on $B(0,N)$.

Third, using diagonal argument, we show that the limit may be chosen independently of $N$. Indeed, using recursive procedure and taking consecutive subsequences $\{m_n^N\}\subseteq \{m_n^{N-1}\}$, we can find a sequence $(\phi_N)$, such that $\phi_{N}(x)=\phi_{N-1}(x)$ for $x\in B(N-1)$.
Next, for any $x\in E$, we set $N_x=\inf\{ N\in\bN: x\in B(N)\}$ and define $\phi\in C(E)$ by $\phi(x):=\phi_{N_x}(x)$.
Also, for the diagonal sequence $(m_n)$ given by $m_n:=m_n^n$, we get
\begin{equation}
Mw_{m_n}\to \phi,\quad n\to\infty,
\end{equation}
uniformly on $B(0,N)$, for any $N\in\bN$. Using~\eqref{eq:equicontinuity}, we get that $\phi$ satisfies
\begin{equation}\label{eq:lm:limit_function:span}
|\phi(x)-\phi(y)|\leq \sup_{\xi\in U}|c(x,\xi)-c(y,\xi)|, \quad x,y\in E.
\end{equation}

Finally, we show that the convergence $Mw_{m_n}\to \phi$ is (globally) uniform. Let $\vep>0$. From~\eqref{assumpt:cost_function} we know that there exists $N_\vep\in \mathbb{N}$ such that
\begin{equation}\label{eq:bound_sup_c}
\sup_{\xi \in U}|c(x,\xi)-c(y,\xi)|\leq \frac{\vep}{3}, \quad x,y\notin B(N_\vep).
\end{equation}
Since $M w_{m_n}\to \phi$ uniformly on $B(N_\vep+1)$, it is sufficient to show that
\begin{equation}\label{eq:bound_outside}
\sup_{x\notin B(N_\vep)} |M w_{m_n}(x)-\phi(x)|\to 0, \quad n\to\infty.
\end{equation}
Consider $x\notin B(N_\vep)$ and $y\in B(N_\vep+1)\setminus B(N_\vep)$. Recalling ~\eqref{eq:equicontinuity} and~\eqref{eq:lm:limit_function:span}, we get
\begin{align}\label{eq:bound_outside2}
|M w_{m_n}(x)-\phi(x)| & \leq |M w_{m_n}(x)-M w_{m_n}(y)|+|M w_{m_n}(y)-\phi(y)|+|\phi(x)-\phi(y)| \nonumber\\
& \leq |M w_{m_n}(y)-\phi(y)| + 2 \sup_{\xi\in U}|c(x,\xi)-c(y,\xi)|.
\end{align}
Since $y\in B(N_\vep+1),$ starting from some $n_0\in \mathbb{N}$, we get $|M w_{m_n}(y)-\phi(y)|\leq \frac{\vep}{3}$ for $n\geq n_0$. As the choice of $\epsilon$ was arbitrary, this inequality together with~\eqref{eq:bound_sup_c} and~\eqref{eq:bound_outside2} concludes the proof of~\eqref{eq:bound_outside} and consequently Step 1 of the proof.

\medskip
\noindent {\it Step 2.}  For brevity, we drop the subscript $n$ from the diagonal sequence $(m_n)$ given in Step 1, i.e. we assume that $Mw_m\to \phi$ uniformly. We show that
\begin{equation}\label{eq:step2.last}
\phi(x)= Mw(x),\quad x\in E,
\end{equation}
where $w\colon E\to \bR$ is given by
\begin{equation}\label{eq:w_def}
e^{w(x)}:=\inf_{\tau\in \mathcal{T}}\E_x\left[e^{\int_0^{\tau} (f(X_s)-\lambda)ds+\phi(X_{\tau})}\right].
\end{equation}
As in the proof of Proposition~\ref{pr:r(f)_cases}, we use the change of measure technique to transform the problem~\eqref{eq:w_def} into the setting where the assumptions of Theorem~\ref{th:stopping_stability} are satisfied. For any $m\in \mathbb{N}$ and $x\in E$ let us define
\begin{align*}
G_m(x) & :=  Mw_{m}(x)-v(x)+\Vert Mw_{m}\Vert+\Vert v\Vert, \quad & G(x)  & :=  \phi(x)-v(x)+\Vert \phi \Vert+ \Vert v\Vert,\\
d_m & :=  r(f)-\lambda_{m}, \quad & d & :=   r(f)-\lambda,\\
\hat{w}_{m}(x) & :=w_{m}(x)-v(x)+\Vert Mw_{m}\Vert+\Vert v\Vert,\quad & \hat{w}(x)& :=w(x) -v(x)+\Vert \phi\Vert+\Vert v\Vert.
\end{align*}
Observe that $G_m\to G$ uniformly and $d_m\uparrow d$. Moreover, from the assumptions, we get $d>0$. Also, using~\eqref{rm:changed_measure}, Proposition~\ref{pr:r(f)_cases}, and~\eqref{eq:w_def}, we get
\[
e^{\hat{w}_{m}(x)} =\inf_{\tau \in \mathcal{T}_{m}}\E_x^{\mathbb{Q}}e^{\tau d_{m} + G_m(X_{\tau})}\quad \textrm{and}\quad e^{\hat{w}(x)}  =\inf_{\tau \in \mathcal{T}}\E_x^{\mathbb{Q}}e^{\tau d + G(X_{\tau})}.
\]
Hence, using Theorem~\ref{th:stopping_stability}, we get $\hat{w}_{m}\to \hat{w}$ uniformly on compact sets and consequently $w_m\to w$ uniformly on compact sets. Moreover, from Theorem~\ref{th:stopping_continuity}, we get $\hat{w}\in C(E)$ and consequently $w\in C(E)$. Finally, recalling~\eqref{eq:def_M}, we get
\[
\sup_{x\in E}|Mw_{m}(x)-Mw(x)|\leq \sup_{\xi \in U}|w_{m}(\xi)-w(\xi)|\to 0, \quad m\to\infty.
\]
This implies uniform convergence of $Mw_{m}\to Mw$. Also, from Step 1 we know that $Mw_{m}\to \phi$, so $\phi(x)=Mw(x)$, $x\in E$. Recalling~\eqref{eq:w_def}, we conclude the proofs of~\eqref{eq:opt_stop} and Step 2.
\end{proof}
\begin{remark}\label{rm:martingale_impulse_Bellman}
The optimal stopping time for~\eqref{eq:opt_stop} in the case that $\lambda<r(f)$ is given by $\hat\tau=\inf\{t\geq 0: w(X_t)=Mw(X_t)\}$. Moreover, one could easily show that $y(t)=e^{\int_0^{t} (g(X_s)-\lambda) ds +w(X_{ t})}$, $t\geq 0$,
is a submartingale while $(y(t\wedge\hat \tau))$ is a martingale. Those conclusions follow from Theorem~\ref{th:stopping_continuity} by applying reasoning as in the proof of Theorem~\ref{th:existence_Bellman_continuous}.
\end{remark}
Before we present the main contribution of this paper, we need to introduce an auxiliary result for the finite time horizon version of~\eqref{eq:RSC_problem_min}. Let us consider
\begin{equation}\label{eq:finite_horizon}
J_T(x,V):=\ln \E_{(x,V)} \left[\exp\left({\int_0^T f(X_s) ds + \sum_{i=1}^{\infty} 1_{\{\tau_i\leq T\}}  c(X_{\tau_i^-},\xi_i)}\right)\right],
\end{equation}
where $T\geq 0$, $x\in E$, $V\in \bV$, and the remaining notation is aligned with~\eqref{eq:RSC_problem_min}; note that for simplicity we dropped the normalizing constant $1/T$ in \eqref{eq:finite_horizon}. For any $n,m\in\bN$, let $\bV^n\subset \bV$ and $\bV^n_m\subset \bV_m$ denote the families of impulse control strategies and $\delta_m$-dyadic impulse control strategies that have at most $n$ impulses, respectively. In Proposition~\ref{pr:finite_horizon_convergence} we show that the optimal value of \eqref{eq:finite_horizon} could be approximated using strategies from $\bV^n_m$. 

\begin{proposition}\label{pr:finite_horizon_convergence}
For any $T\in \bN$ and $x\in E$ it follows that
\begin{equation}\label{eq:final1}
\lim_{n\to\infty}\lim_{m\to\infty}\inf_{V\in \bV_m^n} J_T(x,V)=\inf_{V\in \bV} J_T(x,V).
\end{equation}
\end{proposition}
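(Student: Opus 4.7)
The easy direction $\inf_{V \in \bV_m^n} J_T(x, V) \geq \inf_{V \in \bV} J_T(x, V)$ is immediate because $\bV_m^n \subseteq \bV$, so the iterated limit in~\eqref{eq:final1} is bounded below by the right-hand side. The plan for the reverse inequality is to decompose the argument into two approximations: for each fixed $n \in \bN$, a dyadic rounding of the impulse times yielding $\lim_m \inf_{V \in \bV_m^n} J_T(x, V) = \inf_{V \in \bV^n} J_T(x, V)$, and a truncation of the number of impulses yielding $\lim_n \inf_{V \in \bV^n} J_T(x, V) = \inf_{V \in \bV} J_T(x, V)$.

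\textbf{Truncation step.} Fix $\varepsilon > 0$ and choose $V \in \bV$ with $J_T(x, V) \leq \inf_{V \in \bV} J_T(x, V) + \varepsilon$. Denote by $V^{(n)} \in \bV^n$ the truncation of $V$ keeping only its first $n$ impulses, and let $F(V) := \int_0^T f(X_s) ds + \sum_i 1_{\{\tau_i \leq T\}} c(X_{\tau_i^-}, \xi_i)$ be the exponent in~\eqref{eq:finite_horizon}. Coupling $V$ and $V^{(n)}$ through the common Markov randomness so that their paths coincide up to $\tau_{n+1}^V$, one checks that $F(V^{(n)}) = F(V)$ on $\{N_T^V \leq n\}$, while on $\{N_T^V > n\}$ the cost sum under $V^{(n)}$ is smaller by at least $c_0$ and the $f$-integrals differ by at most $2T\|f\|$, so $F(V^{(n)}) - F(V) \leq 2T\|f\|$. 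Hence
\[
\E_{(x, V^{(n)})}\bigl[e^{F(V^{(n)})}\bigr] \leq \E_{(x, V)}\bigl[e^{F(V)}\bigr] + (e^{2T\|f\|} - 1)\, \E_{(x, V)}\bigl[e^{F(V)} 1_{\{N_T^V > n\}}\bigr].
\]
Since $J_T(x, V) < \infty$ and $F(V) \geq -T\|f\| + c_0 N_T^V$, one has $\E_{(x, V)}[e^{c_0 N_T^V}] < \infty$, so $N_T^V < \infty$ almost surely; dominated convergence then yields $J_T(x, V^{(n)}) \to J_T(x, V)$, and $\varepsilon > 0$ is arbitrary.

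\textbf{Dyadic step.} Fix $n \in \bN$ and $V = (\tau_i, \xi_i)_{i \leq n} \in \bV^n$, and define $V_m \in \bV_m^n$ by $\tau_i^m := \delta_m \lceil \tau_i/\delta_m \rceil$ and $\xi_i^m := \xi_i$. Admissibility holds because $\tau_i \leq \tau_i^m$ implies $\mathcal{F}_{\tau_i} \subseteq \mathcal{F}_{\tau_i^m}$ and the rounded times preserve ordering (ties among rounded times can be resolved by sequential execution, using the triangle inequality~\eqref{eq:c_multiple_prevent}). Coupling $V$ and $V_m$ through the same driving Markov randomness between consecutive impulses, the paths differ only on the intervals $[\tau_i, \tau_i^m]$ of length at most $\delta_m$. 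The Feller property of $X$ combined with the small-time distance control in~\eqref{A3} forces these deviations to vanish as $m \to \infty$, and continuity and boundedness of $f$ and $c$ then yield $F(V_m) \to F(V)$ along the coupling, so dominated convergence gives $J_T(x, V_m) \to J_T(x, V)$. Combining with the truncation step establishes~\eqref{eq:final1}.

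\textbf{Main obstacle.} The dyadic step is the delicate part: one has to couple two controlled-process laws whose impulse times are offset by at most $\delta_m$ and show that both the integrand $f(X_s^{V_m})$ and the pre-impulse states $X_{\tau_i^m-}^{V_m}$ converge to their $V$-counterparts. This rests on the small-time oscillation control in~\eqref{A3} together with Feller regularity, and must be carried out recursively through the $n$ successive impulses. The truncation step, by contrast, is clean once the cost lower bound $c \geq c_0$ is used to tail-bound $N_T^V$. The remaining technicalities are standard dyadic-approximation arguments and will be deferred to Appendix~\ref{S:finite_proof}.
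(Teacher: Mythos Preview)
Your truncation step is correct and coincides in substance with the paper's Lemma~\ref{lm:finite_horizon_conv2}. The dyadic step, however, has a real gap: the assertion that under the coupling ``the paths differ only on the intervals $[\tau_i,\tau_i^m]$ of length at most $\delta_m$'' is false. After the first impulse, $X^V$ restarts from $\xi_1$ at time $\tau_1$ while $X^{V_m}$ restarts from $\xi_1$ at time $\tau_1^m$; under the natural coupling through the same post-impulse coordinate path, the two controlled processes are \emph{time-shifted} on the whole of $[\tau_1^m,\tau_2)$, not equal, and this shift propagates through all later segments. Consequently the pre-impulse states $X^V_{\tau_i^-}$ and $X^{V_m}_{(\tau_i^m)^-}$ are the same coordinate path evaluated at times differing by up to $2\delta_m$, and controlling $c(X^V_{\tau_i^-},\xi_i)-c(X^{V_m}_{(\tau_i^m)^-},\xi_i)$ already requires~\eqref{A3} applied via the strong Markov property, recursively in $i$; the one-line ``deviations vanish'' does not cover this. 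There is also an admissibility issue you elide: $\tau_2$ is a stopping time for the filtration generated by $X^V$, and its rounding $\tau_2^m=\delta_m\lceil\tau_2/\delta_m\rceil$ need not be one for the filtration of $X^{V_m}$, because by time $\tau_2^m$ the process $X^{V_m}$ has observed strictly less of the second coordinate path than $X^V$ has by time $\tau_2$. Rounding the inter-impulse increments $\tau_i-\tau_{i-1}$ rather than the absolute times repairs this, but you do not say so.

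The paper avoids both difficulties by taking an analytic rather than pathwise route: it introduces the recursive value functions $w^n$ and $w^n_m$ of~\eqref{eq:Bellman_finite}--\eqref{eq:Bellman_finite_discrete}, identifies them with the infima over $\bV^n$ and $\bV^n_m$ (Lemma~\ref{lm:finite_horizon_Bellman}), and proves $w^n_m\to w^n$ by rounding a \emph{single} $\varepsilon$-optimal stopping time for the \emph{uncontrolled} process at each step of the recursion (Lemma~\ref{lm:finite_horizon_conv}). This decouples the $n$ impulses into $n$ separate one-stopping-time problems, so no admissibility question arises and the oscillation estimate~\eqref{A3} enters only once per step in a form already handled by the optimal-stopping machinery of Appendix~\ref{S:stopping}. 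Your direct coupling can in principle be pushed through, but it does not shortcut the work; it reproduces the same estimates in a less structured form while adding the admissibility complication.
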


\begin{proof}
The proof follows from lemmas presented in Appendix~\ref{S:finite_proof}. Indeed, combining Lemma~\ref{lm:finite_horizon_Bellman} and Lemma~\ref{lm:finite_horizon_conv} we get
\[
\lim_{m\to\infty}\inf_{V\in \bV_m^n} J_T(x,V)=\lim_{m\to\infty}w^n_m(0,x)=w^n(0,x)=\inf_{V\in \bV^n} J_T(x,V).
\]
Next, letting $n\to\infty$ and recalling Lemma~\ref{lm:finite_horizon_conv2}, we conclude the proof.
\end{proof}
Finally, we are ready to link the constant $\lambda$ given by~\eqref{eq:lambda} with the optimal strategy and the optimal value of~\eqref{eq:RSC_problem_min}; see Theorem~\ref{th:opt_strategy_continuous}. In the case $\lambda<r(f)$, Theorem~\ref{th:existence_Bellman_continuous} guarantees the existence of a solution to Bellman equation~\eqref{eq:opt_stop} that might be used to construct the optimal impulse control strategy. In the degenerate case $\lambda=r(f)$, due to monotonicity of $(\lambda_m)$, we know that $\lambda_m=r(f)$, for any $m\in \mathbb{N}$. Thus, from Proposition~\ref{pr:r(f)_cases}, it immediately follows that  the {\it no impulse} strategy is optimal for any dyadic time-grid. As expected, we show that this implies optimality of the  {\it no impulse} strategy in the continuous case. Here, we made additional assumption $E=U$, which allowed us to use finite time horizon results from Proposition~\ref{pr:finite_horizon_convergence}.

Before we present the main result of this paper, we need to introduce supplementary notation. Let us fix $w\in C(E)$ that is a solution to~\eqref{eq:opt_stop} satisfying $w\geq 0$; from Theorem~\ref{th:existence_Bellman_continuous} we know that such $w$ exists if $\lambda<r(f)$. Let $\hat{V}:=(\hat\tau_i,\hat\xi_i)_{i=1}^{\infty}$ be a strategy given recursively by
\begin{equation}\label{eq:last}
\begin{cases}
\displaystyle \hat\tau_{i}  :=\inf\{t\geq \hat\tau_{i-1}: w(X_t)=Mw(X_t)\},\\
\displaystyle \hat\xi_{i} := \argmin_{\xi\in U} \left[c(X_{\hat\tau_{i}^-},\xi)+w(\xi)\right],
\end{cases}
\end{equation}
for $i=1,2,\ldots$, where $\hat\tau_0:=0$. Note that in \eqref{eq:last} a slight abuse of notation was used: for each recursive step, $X_t$ refers to a process for which $i$th impulse is not yet applied. More formally, \eqref{eq:last} could be rewritten as $\hat\tau_i=\hat\sigma \circ \Theta_{\hat\tau_{i-1}}+\hat\tau_{i-1}$, where $\hat\sigma:=\inf\{t\geq 0: w(X_t)=Mw(X_t)\}$ and $\Theta$ is Markov shift operator; see e.g.~\cite[Section 1.4.3]{Shi1978} for details.

\begin{theorem}\label{th:opt_strategy_continuous}
Let $\lambda$ be given by~\eqref{eq:lambda}. Then,
\begin{enumerate}
\item If $\lambda<r(f)$, then
$\lambda=\inf_{V\in \mathbb{V}} J(x,V)$
for any $x\in E$, and the strategy defined in \eqref{eq:last} via Bellman equation~\eqref{eq:opt_stop} is optimal.
\item If $\lambda=r(f)$ and $E=U$, then
$\lambda=\inf_{V\in \mathbb{V}} J(x,V)$
for any $x\in E$, and the {\it no impulse} strategy is optimal.
\end{enumerate}
\end{theorem}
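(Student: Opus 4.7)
The plan is to handle the two cases separately, via rather different arguments.

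For Part 1 ($\lambda < r(f)$), the approach is a standard verification argument built on the solution $(w, \lambda)$ to~\eqref{eq:opt_stop} supplied by Theorem~\ref{th:existence_Bellman_continuous}; after replacing $w$ by $w - \inf_{\xi \in U} w(\xi)$, which leaves~\eqref{eq:opt_stop} invariant, we may assume $w \geq 0$. The key tool is Remark~\ref{rm:martingale_impulse_Bellman}: the process $y(t) := e^{\int_0^t (f(X_s) - \lambda)\,ds + w(X_t)}$ is a $\bP_x$-submartingale and becomes a martingale when stopped at $\hat\tau$. For the lower bound I would iterate the submartingale property between successive impulse times of an arbitrary $V = (\tau_i, \xi_i) \in \bV$, using at each $\tau_i$ the chain $w(X_{\tau_i^-}) \leq Mw(X_{\tau_i^-}) \leq c(X_{\tau_i^-}, \xi_i) + w(\xi_i)$ together with the Markovian reset $X_{\tau_i} = \xi_i$. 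Iterating up to horizon $T$ and using $0 \leq w \leq \|w\|$ yields
\[
e^{w(x)} \leq e^{\|w\| - \lambda T}\, \bE_{(x,V)}\Bigl[e^{\int_0^T f(X_s)\,ds + \sum_i 1_{\{\tau_i \leq T\}}\, c(X_{\tau_i^-}, \xi_i)}\Bigr],
\]
so taking logarithms, dividing by $T$, and letting $T \to \infty$ produces $J(x, V) \geq \lambda$. For the matching upper bound, every step in the iteration becomes an equality under the strategy $\hat V$ from~\eqref{eq:last}, because $\hat\tau_i$ hits $\{w = Mw\}$ and $\hat\xi_i$ realises the infimum in $M$; combined with $w \geq 0$ this yields $J_T(x, \hat V)/T \leq \lambda + \|w\|/T$, and hence $J(x, \hat V) \leq \lambda$. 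Admissibility of $\hat V$ is verified via continuity of $w$ and $Mw$ together with a standard measurable-selection argument for $\hat\xi_i$.

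For Part 2 ($\lambda = r(f)$, $E = U$), non-increasing monotonicity of $(\lambda_m)$ together with $\lambda_m \leq r(f)$ and $\lim_m \lambda_m = r(f)$ forces $\lambda_m = r(f)$ for every $m$, and Proposition~\ref{pr:r(f)_cases} then gives $J(x, V_0) = r(f) = \lambda$, so $\inf_V J(x, V) \leq \lambda$. The matching inequality $J(x, V) \geq r(f)$ is obtained by reducing to the dyadic finite-horizon setting through Proposition~\ref{pr:finite_horizon_convergence},
\[
\inf_{V \in \bV} J_T(x, V) = \lim_{n \to \infty} \lim_{m \to \infty} \inf_{V \in \bV_m^n} J_T(x, V),
\]
and producing a lower bound on the right-hand side of the form $T r(f) - C$ uniform in $m$, $n$, and $V \in \bV_m^n$. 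I would obtain this bound by applying the MPE change of measure~\eqref{eq:measure_Q} on each uncontrolled segment between impulses (which yields the MPE estimate $\bE_x[e^{\int_0^T f(X_s)\,ds}] \geq e^{T r(f) - 2\|v\|}$ for the no-impulse part) and absorbing the $v$-jumps at impulse times using positivity of $c$ ($c \geq c_0 > 0$). Dividing by $T$ and taking $\limsup_{T \to \infty}$ then gives $J(x, V) \geq r(f)$, matching the upper bound.

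The main obstacle I anticipate is the uniform lower bound in Part 2: in contrast with Part 1, no continuous-time Bellman solution is available to drive a verification argument, so the uniformity in $n$ (the number of impulses) must be teased out of the degenerate dyadic structure via a delicate MPE/strong-Markov accounting that leverages the positivity of $c$. Part 1 is conceptually cleaner but still requires attention to the Markovian bookkeeping inside the iteration and to the measurable selection for $\hat\xi_i$.
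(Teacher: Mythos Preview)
Your plan for Part~1 is the same verification scheme the paper uses, built on the sub/martingale properties of Remark~\ref{rm:martingale_impulse_Bellman}. One step you do not mention, and which the paper treats explicitly, is verifying that the impulse times $\hat\tau_n$ under $\hat V$ do not accumulate in finite time: without this, your ``iteration up to horizon $T$'' is ill-defined (there may be infinitely many impulses before $T$, and the terminal term $w(\widetilde X_T)$ is not identified). The paper handles this by passing to the limit in $n$ via Fatou's lemma in the martingale identity and using $c\geq c_0>0$ to conclude $\bE_{(x,\hat V)}\bigl[e^{\sum_i 1_{\{\hat\tau_i\leq T\}}c(\cdot,\cdot)}\bigr]<\infty$, hence $\hat\tau_n\uparrow\infty$ a.s. This is a small but essential ingredient you should add.

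Your plan for Part~2 has a genuine gap. When you apply the MPE segment-by-segment, each impulse produces a $v$-jump $v(X_{\tau_i^-})-v(\xi_i)$ of size up to $2\|v\|$, while the cost contributes only $c_0$. Unless $c_0\geq 2\|v\|$ --- which is nowhere assumed, and $\|v\|$ is not under your control --- the resulting lower bound on $J_T(x,V)$ for $V\in\bV_m^n$ is of the form $Tr(f)-C_n$ with $C_n$ growing linearly in $n$, and this does not survive the limit $n\to\infty$ in Proposition~\ref{pr:finite_horizon_convergence}. Notice also that your argument never uses the hypothesis $E=U$, which is a red flag. The paper proceeds quite differently: arguing by contradiction, it fixes a large period $T_0\in\bN$, uses Proposition~\ref{pr:finite_horizon_convergence} to extract a near-optimal finite-impulse dyadic strategy $\bar V\in\bV_m^n$ on $[0,T_0]$, and then \emph{periodises} it by repeating $\bar V$ on each block $[kT_0,(k+1)T_0]$ and shifting the process back to $x_0$ at each endpoint --- this shift is exactly where $E=U$ enters. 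The resulting long-run strategy $\widetilde V$ lies in $\bV_m$ and satisfies $J(x_0,\widetilde V)\leq \tfrac{1}{T_0}J_{T_0}(x_0,\bar V)+\tfrac{\|c\|}{T_0}<r(f)$, contradicting $\lambda_m=r(f)$ from Proposition~\ref{pr:r(f)_cases}. This periodisation trick, which converts a good finite-horizon strategy into a dyadic long-run strategy and thereby transfers the contradiction back to the already-solved dyadic problem, is the missing idea in your outline.
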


\begin{proof}
For transparency, we split the proof into two parts: (1) when $\lambda<r(f)$; (2) when $\lambda=r(f)$ and $E=U$.

\medskip

\noindent {\it Proof of (1).} We adapt the arguments from \cite[Proposition 2.3]{SadSte2002} to the continuous time case. Given strategy $V\in\bV$, for any $i \in \bN$, we use the notation
\[
X_t^i:=1_{\{t<\tau_{i+1}\}} X_t + 1_{\{t\geq \tau_{i+1}\}}X_{\tau_{i+1}^-},
\]
where $X_{\tau_{i+1}^-}$ denotes the state of the process $(X_t)$ right before the $(i+1)$th impulse.

First, we show that $\lambda=J(x,\hat{V})$ for any $x\in E$. By Remark~\ref{rm:martingale_impulse_Bellman} we know that
\[
e^{\int_0^{\hat\tau_1\wedge T} (f(X_s)-\lambda )ds  +w(X^0_{\hat\tau_1\wedge T})},\quad T\geq 0,
\]
is a martingale under $\bP_{(x,\hat V)}$. As $w(X^0_{\hat\tau_1})=Mw(X^0_{\hat\tau_1})=c(X^0_{\hat\tau_1},\hat\xi_1)+w(\hat\xi_1)$, we get
\begin{align*}
e^{w(x)} & = \E_{(x,\hat{V})} \left[ e^{\int_0^{\hat\tau_1\wedge T} (f(X_s)-\lambda )ds  +w(X^0_{\hat\tau_1\wedge T})}\right]\\
 & =\E_{(x,\hat{V})} \left[ e^{\int_0^{\hat\tau_1\wedge T} (f(X_s)-\lambda )ds  +1_{\{\hat\tau_1\leq T\}} c(X^0_{\hat\tau_1},X^1_{\hat\tau_1})+1_{\{\hat\tau_1\leq T\}}w(X^1_{\hat\tau_1})+1_{\{\hat\tau_1> T\}} w(X_T^0) }\right].
\end{align*}
Acting recursively we get
\begin{equation}\label{eq:last1}
e^{w(x)}  =\E_{(x,\hat{V})} \left[ e^{\int_0^{\hat\tau_n\wedge T} (f(X_s)-\lambda )ds  +\sum_{i=1}^n 1_{\{\hat\tau_i\leq T\}} c(X^{i-1}_{\hat\tau_i},X^i_{\hat\tau_i})+w(X^n_{\hat\tau_n}(T))}\right],
\end{equation}
where
\[
X^n_{\tau_n}(T):=
\begin{cases}
X^n_{\tau_n}, & \tau_n\leq T,\\
X^k_T, & \tau_k\leq T< \tau_{k+1}.
\end{cases}
\]
Using Fatou Lemma and boundedness of $f$ and $w$, we get
\begin{align*}
\infty > e^{w(x)}\geq &  \E_{(x,\hat{V})} \left[\liminf_{n\to\infty} e^{\int_0^{\hat\tau_n\wedge T} (f(X_s)-\lambda )ds  +\sum_{i=1}^n 1_{\{\hat\tau_i\leq T\}} c(X^{i-1}_{\hat\tau_i},X^i_{\hat\tau_i})+w(X^n_{\hat\tau_n}(T))}\right] \\
\geq & \E_{(x,\hat{V})} \left[ e^{T(-\Vert f\Vert-|\lambda|)+\sum_{i=1}^\infty 1_{\{\hat\tau_i\leq T\}} c(X^{i-1}_{\hat\tau_i},X^i_{\hat\tau_i}) -\Vert w \Vert}\right].
\end{align*}
This implies
\[
\E_{(x,\hat{V})} \left[e^{\sum_{i=1}^\infty 1_{\{\hat\tau_i\leq T\}} c(X^{i-1}_{\hat\tau_i},X^i_{\hat\tau_i})}\right]<\infty,
\]
for any $T\geq 0$. Thus, recalling that by~\eqref{A2} the cost function $c$ is bounded away from zero, we conclude that $\hat\tau_n\uparrow \infty$. Consequently, we get $X^n_{\hat\tau_n}(T)\to \widetilde{X}_T$, where $\widetilde{X}_T(\omega):=X^k_T(\omega)$ on $\{\hat\tau_k(\omega)\leq T <\hat\tau_{k+1}(\omega)\}$. Finally, by bounded convergence theorem applied to \eqref{eq:last1}, we get
\begin{equation}\label{eq:th:opt_strategy_continuous:eq1}
e^{w(x)} =\E_{(x,\hat{V})} \left[ e^{\int_0^{T} (f(X_s)-\lambda )ds  +\sum_{i=1}^\infty 1_{\{\hat\tau_i\leq T\}} c(X^{i-1}_{\hat\tau_i},X^i_{\hat\tau_i})+w(\widetilde{X}_T)}\right].
\end{equation}
Taking logarithm of both sides, dividing by $T$, and recalling that $0\leq w(\cdot)\leq \Vert w\Vert$, we get
\[
\lambda = \limsup_{T\to\infty} \frac{1}{T}\ln \E_{(x,\hat{V})} \left[ e^{\int_0^{T} f(X_s)ds  +\sum_{i=1}^\infty 1_{\{\hat\tau_i\leq T\}} c(X^{i-1}_{\hat\tau_i},X^i_{\hat\tau_i})}\right],
\]
which concludes the proof of equality $\lambda=J(x,\hat{V})$, for any $x\in E$.

Second, we show that for any $x\in E$ and strategy $V=(\tau_i,\xi_i)\in \mathbb{V}$, we get $\lambda\leq J(x,V).$ Clearly, we can restrict our attention to the strategies for which
\begin{equation}\label{eq:th:opt_strategy_continuous:eq2}
\E_{(x,V)} \left[ e^{\sum_{i=1}^\infty 1_{\{\tau_i\leq T\}} c(X^{i-1}_{\tau_i},\xi_i)}\right]<\infty,\quad T\geq 0.
\end{equation}
By Remark~\ref{rm:martingale_impulse_Bellman} we know that $e^{\int_0^{T} (f(X_s)-\lambda) ds +w(X_{ T})}$ is a submartingale. Hence,
\[
e^{w(x)}\leq \E_{(x,V)} \left[ e^{\int_0^{\tau_1 \wedge T} (f(X_s)-\lambda) ds +w(X^0_{\tau_1 \wedge T})}\right].
\]
Using the fact that $w(X^{n-1}_{\tau_n})\leq Mw(X^{n-1}_{\tau_n})\leq c(X^{n-1}_{\tau_n},\xi_n)+w(\xi_n)$, we get
\[
e^{w(x)}\leq \E_{(x,V)} \left[ e^{\int_0^{\tau_n \wedge T} (f(X_s)-\lambda) ds +\sum_{i=1}^n 1_{\{\tau_i\leq T\}} c(X^{i-1}_{\tau_i},X^i_{\tau_i})+w(X^n_{\tau_n}(T))}\right].
\]
Recalling~\eqref{eq:th:opt_strategy_continuous:eq2} and letting $n\to\infty$, we get
\[
e^{w(x)}\leq \E_{(x,V)} \left[ e^{\int_0^{T} (f(X_s)-\lambda) ds +\sum_{i=1}^\infty 1_{\{\tau_i\leq T\}} c(X^{i-1}_{\tau_i},X^i_{\tau_i})+\Vert w \Vert}\right].
\]
As before, we get $\lambda\leq J(x,V)$, which concludes this part of the proof.

\medskip

\noindent {\it Proof of (2).} Using Theorem~\ref{th:PS_existence} and Proposition~\ref{pr:r(f)_cases} we get that the cost of the {\it no impulse} strategy equals $r(f)$. Thus, it is sufficient to show that for any $x\in E$ we get $\inf_{V\in \bV} J(x,V)\geq r(f)$. For the contradiction, suppose that $\inf_{V\in \bV} J(x_0,V)< r(f)$ for some $x_0\in E$. Then, for some $\varepsilon>0$, we get
\begin{equation}\label{eq:ccal1}
\limsup_{T\to\infty} \frac{1}{T}\inf_{V\in \bV} J_T(x_0,V) \leq \inf_{V\in \bV} J(x_0,V) <r(f)-\varepsilon,
\end{equation}
where $J_T$ is given by~\eqref{eq:finite_horizon}. Next, we can find $T_0\in \bN$ big enough to get
\begin{equation}\label{eq:ccal2}
\inf_{V\in \bV} \frac{1}{T_0} J_{T_0}(x_0,V)\leq \limsup_{T\to\infty} \frac{1}{T}\inf_{V\in \bV} J_T(x_0,V)+\frac{\varepsilon}{4}\quad \text{and}\quad \frac{\Vert c \Vert}{T_0}\leq \frac{\varepsilon}{4}.
\end{equation}
Using Propostion~\ref{pr:finite_horizon_convergence} we can find $n\in \bN$, $m\in \bN$, and a strategy $\bar{V}\in \bV^n_m$, such that
\begin{equation}\label{eq:ccal3}
\frac{1}{T_0}J_{T_0}(x_0,\bar{V})\leq \inf_{V\in \bV} \frac{1}{T_0}J_{T_0}(x_0,V)+\frac{\varepsilon}{4}.
\end{equation}
Define the strategy $\widetilde{V}$ in the following way: for any period $[kT_0,(k+1)T_0]$, $k\in \bN$, we follow the strategy $\bar{V}$ and at $(k+1)T_0$ we shift the process to $x_0$. It should be noted that $\widetilde{V}\in \bV_m$, as $E=U$ and $T_0\in \bN$. Then, we get
\begin{align}\label{eq:ccal4}
J(x_0,\widetilde{V}) &=\limsup_{k\to\infty} \frac{1}{kT_0} \ln \left(\bE_{(x_0,\widetilde{V})} \left[e^{\int_0^{T_0} f(X_s) ds+\sum_{i=1}^n 1_{\{\tau_i\leq T_0\}}c(X_{\tau_i}^-,\xi_i)+c(X_{T_0},x_0)}\right]\right)^k\nonumber\\
& \leq \frac{1}{T_0}J_{T_0}(x_0,\bar{V}) + \frac{\Vert c \Vert}{T_0}.
\end{align}
Combining~\eqref{eq:ccal1}--\eqref{eq:ccal3} with \eqref{eq:ccal4} we get $J(x_0,\widetilde{V})<r(f)-\frac{\varepsilon}{4}$. This leads to contradiction since by Proposition~\ref{pr:r(f)_cases} we have $\inf_{V\in \bV_m}J(x_0,V)=r(f)$.
\end{proof}

Theorem \ref{th:opt_strategy_continuous} has an important meaning from applications' point of view. It says that if $\lambda<r(f)$, then $\lambda$ is the optimal value of the cost functional  $J(x,V)$. On the other hand, optimal values $(\lambda_m)$ of the discretized dyadic impulse control problems approximate $\lambda$ from above, i.e. $\lambda_m$ converges decreasingly to $\lambda$. Thus, an $\varepsilon$-optimal strategy for $\lambda_m$ is also an $(\varepsilon + \lambda_m-\lambda)$-optimal strategy for the continuous time impulse control. This shows that the dyadic impulse control strategies could be used to obtain nearly optimal strategies for the original continuous time impulse control. Moreover, dyadic impulse control can be subject to further state discretization. To sum up, this shows how one can construct feasible nearly optimal controls for the original continuous time impulse control problem.

Alternatively, if $\lambda<r(f)$, one could try to directly approximate Bellman equation \eqref{eq:opt_stop} to construct nearly optimal strategies. However, we do not have uniqueness (up to an additive constant) of $w$, i.e. we were only able to show that $w_m$ converges to $w$ uniformly along a subsequence.\footnote{Still, recall that optimal impulse control for dyadic Bellman equation \eqref{eq:opt_stop_discrete} is nearly optimal for Bellman equation \eqref{eq:opt_stop} as $\lambda$ is uniquely defined due to monotonicity of the sequence $(\lambda_m)$.} If we assume that $w$ is unique, we get that $w_m$ converges to $w$ uniformly and consequently we can control construction of nearly optimal strategies using a direct approximation of Bellman equation \eqref{eq:opt_stop}. This is the case e.g. when $c(x,\xi)=c(x)+d(\xi)$; see~\cite[Section 3]{SadSte2002} for details.

\section{Reference examples}\label{S:examples}

In this section we show three examples of processes satisfying assumptions~\eqref{A3}--\eqref{A4}. Example~\ref{ex:1} is taken from \cite{PitSte2019} and considers a piecewise deterministic process from \cite{Dav1984} that was later embedded into the control theory framework in~\cite{BauRie2011}. Example~\ref{ex:2} considers the compact domain reflected diffusion process that was studied in~\cite{GarMen2002} and~\cite{MenRob1997}. Example~\ref{ex:3} could be see as an extension of Example~\ref{ex:2} allowing jumps; see~\cite[Remark 2.1b]{MenRob2018} and references therein. For brevity, we decided to only outline the frameworks without providing formal process construction details; we refer to \cite{PitSte2019,BauRie2011,GarMen2002,MenRob2018} for further information.

\begin{example}[Piecewise deterministic process]\label{ex:1}
Assume that $(X_t)$ is a piecewise deterministic process. The deterministic part is a solution to a stable differential equation
\begin{equation}\label{eqq1}
dX_t=F(X_t)dt,
\end{equation}
with initial state $X_0=x$. The process follows this dynamics till (random) jump moment, and then is subject to immediate shift, after which its evolution follows the same deterministic logic till next jump occurs, and so on. We assume that the sequence of jumps, say $(\tau_n)$, is such that $(\tau_{n+1}-\tau_n)$ is i.i.d. and exponentially distributed with fixed intensity $r>0$. The shifts are made according to transition measure such that
\[
X_{\tau_n}=A(X_{\tau_n^-})+w_n,
\]
where $w_n$ is a sequence of i.i.d. standard normal random variables and  function $A\colon \bR\to \bR$ is bounded. Assuming suitable regularity of $F$, for any $t<\tau_1$ and initial state $x$, we get $X_t=\phi(x,t)$, where $\phi$ is a continuous function. Moreover, we assume that $\phi$ is such that for any $x\in E$ we have $|\phi(x,t)|\leq e^{-\alpha t}|x|+M$, where $\alpha, M>0$ are some predefined constants that are independent of $x$. In particular, this implies \eqref{A3}. Also, following \cite[Example 5.2]{PitSte2019}, we can show that \eqref{A4a} is satisfied. Indeed, noting that after the $1$st jump the process starts from $A(X_{\tau_1^-})+w_1$, where $A$ is bounded  and $w_1$ is standard normal, we get
\begin{equation}
\kappa_t:=\sup_{x,y\in E}\sup_{A\in \cal{E}} \mathbb{P}_x(X_t\in A)-\mathbb{P}_y(X_t\in A)<1,\quad \textrm{for } t>0,
\end{equation}
which implies \eqref{A4a}. Next, we get that $T_tv(x):=\ln \mathbb{E}_x \left[\exp \left(\int_0^t f(X_s)ds + v(X_t)\right)\right]$ is a local contraction operator in the span norm $\|\cdot\|_{sp}$; see  \cite{PitSte2019} for details. Thus, for any function $f\in C(E)$ satisfying $\kappa_t<e^{-t \|f\|_{sp}}$ for some $t>0$, we get \eqref{A4b}.
\end{example}

\begin{example}[Reflected diffusion]\label{ex:2} Let $\alpha\in (0,1)$ and $E:=\overline{\mathcal{O}}$, where $\mathcal{O}$ is a bounded non-empty domain in $\mathbb{R}^d$ with $\mathcal{C}^{2+\alpha}$-class boundary. Also, we assume that $A=(a_{ij})_{i,j=1}^d$ is a uniformly elliptic and symmetric matrix-valued mapping, with bounded and ($\alpha$-exponent) H\"{o}lder continuous margins $a_{ij}:\overline{\mathcal{O}}\mapsto \mathbb{R}$. Finally, let $b:=(b_i)_{i=1}^{d}$ be an  $\mathbb{R}^d$-valued mapping with $\mathcal{C}^{1+\alpha}(\partial \mathcal{O})$-class margins that satisfy non-tangentiality condition, i.e. for some $c_0>0$ we get $b(x)\cdot n(x)\geq c_0$, $ x\in \partial \mathcal{O}$, where $n(x)=(n_1(x),\ldots,n_d(x))$ denotes the unit outward normal to $\mathcal{O}$ at $x$.

Following argument leading to~\cite[Equation (7.1.18)]{GarMen2002}, we get that there exists a weak solution to
\begin{equation}\label{eq:reflected_diff}
dX_t=A^{1/2}(X_t)dW_t-b(X_t)d\xi_t,
\end{equation}
where $A^{1/2}$ denotes the positive square root of $A$. Namely, there exists a pair of processes $(X_t,\xi_t)$ for some $d$-dimensional Brownian motion $(W_t)$, where the process $(X_t)$ is understood as the reflected diffusion and $(\xi_t)$ describes the reflection; see~\cite[Section 2.1]{BenLio1984} for details. From~\cite[Section 7.1]{GarMen2002}, we know that $(X_t)$ is Feller--Markov with values in $E$ and the transition probability satisfying
\begin{equation}\label{eq:reflected_diff_density1}
\mathbb{P}_x\left[X_t\in A\right]=\int_A p_t(x,y) dy, \quad t>0, x\in E, A\in \mathcal{B}(E),
\end{equation}
where $(x,y)\mapsto p_t(x,y)$ is continuous for any $t>0$. Also, using \cite[Theorem 4.3.7]{GarMen2002} and compactness of $E$, for any $t>0$ we can find constants $0<a_t\leq b_t<\infty$ such that $a_t\leq p_t(x,y)\leq b_t$, for $x,y\in E$, hence \eqref{A4} is satisfied. Recalling that $X$ is $C$-Feller and $E$ is compact, we also get \eqref{A3}; see~\cite[Lemma 5.1 and Propositon 6.4]{BasSte2018}. 

\end{example}

\begin{example}[Reflected diffusion with jumps]\label{ex:3}
Using the results from the theory of integro-differential equations one could expand Example~\ref{ex:2} by allowing jumps. In~\cite{GarMen2002}, it is shown that the solution to
\begin{equation}\label{eq:ex:2}
dX_t=A^{1/2}(X_t)dW_t+\int_{\mathbb{R}^d\setminus\{0\}}z\mu_X(dt,dz)-b(X_t)d\xi_t,
\end{equation}
exists. Here, $\mu_X$ denotes the measure associated with the Doob-Meyer decomposition of the suitable L\'{e}vy measure; we refer to \cite[Section 7.1]{GarMen2002} for details. Due to the second term in \eqref{eq:ex:2}, the process $(X_t)$ might be seen as a reflected diffusion with jumps. Using similar logic as in Example~\ref{ex:2}, one can show that~\eqref{A3} and~\eqref{A4} are satisfied.
\end{example}

\appendix

\section{Properties of the optimal stopping problems}\label{S:stopping} For completeness, we present selected auxiliary properties of the risk sensitive optimal stopping problems that are used in this paper. This section could be seen as an extract from~\cite{JelPitSte2019a}. For brevity we omit the proofs; see \cite{JelPitSte2019a} for details and further discussions.

Throughout this section we use $X=(X_t)$ to denote an uncontrolled standard Feller-Markov process that satisfy assumption \eqref{A3}; \eqref{A4} is not required. It should be noted that while in~\cite{JelPitSte2019a} it is assumed that the process is $C_0$-Feller, all proofs therein are based on the process properties given in \cite[Proposition 2.1]{JelPitSte2019a}. It is easy to check that those properties are satisfied in our setting due to $C$-Feller property and \eqref{A3}.

Let $G$ and $g$ be real-valued continuous and bounded functions on $E$, such that $G(\cdot)\geq 0$ and $g(\cdot)\geq c$ for some $c>0$. Recall that by $\mathcal{T}$ we denote the family of stopping times with values in $[0,\infty)$, while $\mathcal{T}_{m}$ denotes the subfamily of dyadic stopping times defined on time-grid $\{0,\delta_m, 2\delta_m, \ldots\}$, where $\delta_m:=(1/2)^m$ and $m\in\bN$.

We start with the verification theorem for the optimal stopping problem on a discrete time-grid. For the proof we refer to \cite[Proposition 3.3]{JelPitSte2019a}; see also \cite[Proposition 4.3]{JelPitSte2019a}. This result is helpful in the proof of Proposition~\ref{pr:r(f)_cases}.

\begin{proposition}[{\cite[Proposition 3.3]{JelPitSte2019a}}]\label{pr:stopping_Bellman} Let $m\in \mathbb{N}$ and let $u_m(\cdot)\geq 1$ be a measurable function satisfying
$u_m(x)=\mathbb{E}_x\left[\exp\left(\int_0^{\delta_m}g(X_s)ds\right) u_m(X_{\delta_m})\right]\wedge e^{G(x)}$ for any $x\in E$.
Then, for any $x\in E$, we have
\[
u_m(x)=\inf_{\tau \in \mathcal{T}_m} \mathbb{E}_x\left[\exp\left(\int_0^{\tau}g(X_s)ds+G(X_{\tau})\right)\right].
\]
\end{proposition}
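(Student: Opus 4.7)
The plan is to prove both $u_m(x)\leq V(x)$ and $u_m(x)\geq V(x)$, where
\[
V(x) := \inf_{\tau\in\mathcal{T}_m}\mathbb{E}_x\left[\exp\left(\int_0^\tau g(X_s)\,ds + G(X_\tau)\right)\right].
\]
The main tool is the non-negative process $M_n := \exp(\int_0^{n\delta_m} g(X_s)\,ds)\, u_m(X_{n\delta_m})$, $n\in\bN$. By the fixed-point equation and the Markov property $M_n$ is a $\mathbb{P}_x$-submartingale, and it becomes a martingale when stopped at $\hat\tau := \inf\{n\delta_m : u_m(X_{n\delta_m}) = e^{G(X_{n\delta_m})}\}$, because on $\{\hat\tau > n\delta_m\}$ the minimum in the fixed-point equation is attained by the first term, turning the submartingale inequality into an equality.

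For the upper bound, I would apply optional sampling to the submartingale at $\tau\wedge N\delta_m$ to obtain $u_m(x)\leq \mathbb{E}_x[M_{\tau\wedge N\delta_m}]$ for every $\tau\in\mathcal{T}_m$ and every $N\in\bN$. Using $u_m\leq e^G$ (a direct consequence of the fixed-point equation) and $g\geq 0$, one has $M_{\tau\wedge N\delta_m}\leq \exp(\int_0^\tau g\,ds+\|G\|)$, which is $\mathbb{P}_x$-integrable whenever the claimed right-hand side is finite (otherwise the bound is trivial). Dominated convergence then yields $u_m(x)\leq \mathbb{E}_x[M_\tau]\leq \mathbb{E}_x[\exp(\int_0^\tau g\,ds+G(X_\tau))]$, and taking the infimum over $\tau\in\mathcal{T}_m$ gives $u_m\leq V$.

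For the lower bound, the martingale property of $(M_{n\wedge\hat\tau})$ gives $\mathbb{E}_x[M_{\hat\tau\wedge N\delta_m}] = u_m(x)$ for every $N$. Since $g\geq c>0$ and $u_m\geq 1$, on $\{\hat\tau>N\delta_m\}$ one has $M_{\hat\tau\wedge N\delta_m}\geq e^{cN\delta_m}$; hence $\mathbb{P}_x(\hat\tau>N\delta_m)\leq \|u_m\| e^{-cN\delta_m}$, so $\hat\tau$ is $\mathbb{P}_x$-a.s.\ finite and therefore $\hat\tau\in\mathcal{T}_m$. Fatou's lemma applied to $M_{\hat\tau\wedge N\delta_m}\to M_{\hat\tau}$ then gives $\mathbb{E}_x[M_{\hat\tau}]\leq u_m(x)$; combined with the identity $u_m(X_{\hat\tau})=e^{G(X_{\hat\tau})}$ (built into the definition of $\hat\tau$), this reads $V(x)\leq \mathbb{E}_x[\exp(\int_0^{\hat\tau} g\,ds+G(X_{\hat\tau}))]=\mathbb{E}_x[M_{\hat\tau}]\leq u_m(x)$, which concludes the proof.

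The delicate point throughout is the passage from a finite to an infinite horizon. In the upper bound this is handled by restricting to the non-trivial case in which the dominating function is integrable. In the lower bound the positivity $g\geq c>0$ is used in an essential way: without it the exponential tail estimate for $\hat\tau$ would fail, Fatou would yield an inequality in the wrong direction, and one would need a substantially more involved argument (e.g.\ iterated dynamic programming or a span-contraction fixed-point characterisation) to establish optimality of $\hat\tau$.
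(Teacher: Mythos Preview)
Your proof is correct. Note, however, that the paper itself does not prove this proposition: it is stated in Appendix~\ref{S:stopping} as an extract from~\cite{JelPitSte2019a}, and the proof is explicitly omitted there (``For brevity we omit the proofs; see \cite{JelPitSte2019a} for details''). Your argument---the submartingale/martingale verification via $M_n=\exp\bigl(\int_0^{n\delta_m}g\bigr)\,u_m(X_{n\delta_m})$, optional sampling for the upper bound, the exponential tail estimate $\bP_x(\hat\tau>N\delta_m)\leq u_m(x)e^{-cN\delta_m}$ coming from $g\geq c>0$ and $u_m\geq 1$, and Fatou for the lower bound---is the standard verification scheme for this type of one-step Bellman equation and is what one expects the cited reference to contain.
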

Next, we list the properties of the infinite horizon optimal stopping problem. In particular, we have the continuity of the optimal value function; this is helpful e.g. when proving Theorem~\ref{th:existence_Bellman_continuous}.
\begin{theorem}[{\cite[Theorem 4.7 and Remark 4.9]{JelPitSte2019a}}]\label{th:stopping_continuity}
The function $u$ given by
\[
u(x):=\inf_{\tau \in \mathcal{T}}\mathbb{E}_x\left[\exp\left(\int_0^{\tau}g(X_s)ds+G(X_{\tau})\right)\right], \quad x\in E,
\]
is continuous and bounded. Moreover, the optimal stopping time for $u$ is given by $\hat{\tau}:=\inf\left\{t\geq 0: u(X_t)=e^{G(X_t)}\right\}$, the process $y(t):=e^{\int_0^t g(X_s)ds}u(X_t)$, $t\geq 0$, is a submartingale and $(y(t\wedge\hat{\tau}))$, $t\geq 0$, is a martingale.
\end{theorem}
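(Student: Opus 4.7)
\medskip

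\noindent\textbf{Proof proposal for Theorem~\ref{th:stopping_continuity}.}

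The plan is to pass through the dyadic optimal stopping problems of Proposition~\ref{pr:stopping_Bellman}, prove continuity and boundedness of each dyadic value $u_m$, show $u_m\downarrow u$ with sufficient regularity to transfer continuity to the limit, and finally deduce the (sub)martingale characterisation from a dynamic programming principle.

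\emph{Step 1 (Boundedness).} Since $g(\cdot)\ge c>0$ and $G(\cdot)\ge 0$, the integrand $\exp(\int_0^\tau g(X_s)\,ds+G(X_\tau))$ is at least $1$, so $u(x)\ge 1$. Taking the admissible choice $\tau\equiv 0$ gives $u(x)\le e^{G(x)}\le e^{\|G\|}$. The same bounds hold uniformly for every $u_m$.

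\emph{Step 2 (Continuity of dyadic values).} For fixed $m$, define $u_m(x):=\inf_{\tau\in\mathcal{T}_m}\mathbb{E}_x[\exp(\int_0^\tau g(X_s)\,ds+G(X_\tau))]$. A standard span-contraction / fixed-point argument on the bounded operator $T_m h(x):=\mathbb{E}_x[\exp(\int_0^{\delta_m}g(X_s)ds)\,h(X_{\delta_m})]\wedge e^{G(x)}$, acting on $C(E)$, produces a continuous bounded fixed point; continuity of $T_m h$ for $h\in C(E)$ comes from the $C$-Feller property, continuity and boundedness of $g,G$, and the distance-control assumption \eqref{A3} which handles uniform control of the time integral. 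By Proposition~\ref{pr:stopping_Bellman} this fixed point equals $u_m$, so $u_m\in C(E)$.

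\emph{Step 3 (Passage to the limit).} Because $\mathcal{T}_m\subset\mathcal{T}_{m+1}\subset\mathcal{T}$, the sequence $(u_m)$ is non-increasing, and $u_m(x)\ge u(x)$ for all $x$. For the reverse inequality, given $\tau\in\mathcal{T}$ define dyadic approximations $\tau_m:=\inf\{k\delta_m: k\delta_m\ge\tau\}\in\mathcal{T}_m$; these satisfy $\tau_m\downarrow \tau$. Right-continuity of the paths of $X$, boundedness and continuity of $G$ together with $\int_0^{\tau_m}g\,ds\to\int_0^\tau g\,ds$ and dominated convergence (the integrand is dominated by $e^{\|g\|\tau_1+\|G\|}$ after a suitable truncation; the sup-bounds of $g,G$ suffice because the whole exponent is bounded) give $\mathbb{E}_x[e^{\int_0^{\tau_m}g\,ds+G(X_{\tau_m})}]\to\mathbb{E}_x[e^{\int_0^{\tau}g\,ds+G(X_{\tau})}]$. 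Taking the infimum yields $\lim_m u_m(x)\le u(x)$, hence $u_m\downarrow u$ pointwise. To lift this to \emph{uniform} convergence on compacts, I use equicontinuity of $\{u_m\}$: the Bellman identity and \eqref{A3} give a uniform modulus of continuity via $|u_m(x)-u_m(y)|\le\sup_{\tau}|\mathbb{E}_x[\cdot]-\mathbb{E}_y[\cdot]|$ with the Feller/distance-control estimates. By Arzelà--Ascoli and monotonicity of the limit, $u_m\to u$ uniformly on compacts, so $u\in C(E)$. This uniform-continuity transfer is the main technical obstacle; it is where the $C$-Feller property plus~\eqref{A3} are genuinely used.

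\emph{Step 4 (DPP and (sub)martingale property).} From the dyadic DPP $u_m(x)=\mathbb{E}_x[e^{\int_0^{\delta_m}g\,ds}u_m(X_{\delta_m})]\wedge e^{G(x)}$ and the uniform convergence $u_m\to u$, a standard limit argument yields the continuous DPP
\[
u(x)=\inf_{\sigma\in\mathcal{T}}\mathbb{E}_x\Bigl[e^{\int_0^{\sigma\wedge t}g(X_s)\,ds}\,u(X_{\sigma\wedge t})\Bigr],\qquad u(x)\le\mathbb{E}_x\Bigl[e^{\int_0^{t}g(X_s)\,ds}u(X_t)\Bigr],
\]
for every $t\ge 0$, and $u(x)\le e^{G(x)}$. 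Combined with the strong Markov property, the first inequality gives $\mathbb{E}[y(t+h)\mid\mathcal{F}_t]\ge y(t)$, i.e.\ $y$ is a submartingale. Continuity of $u$ and càdlàg paths of $X$ make $\hat\tau=\inf\{t\ge 0: u(X_t)=e^{G(X_t)}\}$ a stopping time. To show $(y(t\wedge\hat\tau))$ is a martingale, I verify that on $\{t<\hat\tau\}$ the stopping region has not been entered, so equality holds in the DPP along the optional path up to $\hat\tau$; more precisely, one checks that $u(X_{t\wedge\hat\tau})=\mathbb{E}_{X_{t\wedge\hat\tau}}[e^{\int_0^{h}g\,ds}u(X_{h})]$ for small $h$ on $\{t<\hat\tau\}$, using continuity of $u-e^{G}$ and strict positivity of this gap strictly before $\hat\tau$. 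Optimality of $\hat\tau$ then follows from the optional sampling / closing argument: $u(x)=\mathbb{E}_x[y(\hat\tau)/1]=\mathbb{E}_x[e^{\int_0^{\hat\tau}g\,ds+G(X_{\hat\tau})}]$ once one verifies the admissibility $\hat\tau<\infty$ $\mathbb{P}_x$-a.s., which is forced by $g\ge c>0$ and boundedness of $u$: on $\{\hat\tau=\infty\}$ the submartingale $y$ would grow at least like $e^{ct}$, contradicting $\mathbb{E}_x[y(t)]\le e^{\|G\|}\mathbb{E}_x[e^{\|g\|t}]$ only if we combine with the closing inequality $\mathbb{E}_x[y(\hat\tau\wedge t)]\le u(x)$; an appeal to Fatou after dividing by $e^{ct}$ finishes this.
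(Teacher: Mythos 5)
You should first note that the paper itself does not prove this statement: it is imported verbatim from \cite{JelPitSte2019a} (Theorem 4.7 and Remark 4.9) with proofs explicitly omitted, and your roadmap (dyadic values, regularity, DPP) does mirror that source at the level of structure. Nevertheless, the two steps that carry the entire weight of the continuity claim have genuine gaps. In Step 2, the ``span-contraction / fixed-point'' argument fails here: since $g\geq c>0$, the one-step operator $T_m h(x)=\E_x\left[e^{\int_0^{\delta_m}g(X_s)ds}h(X_{\delta_m})\right]\wedge e^{G(x)}$ \emph{expands} the supremum norm by a factor $e^{c\delta_m}>1$ on the continuation region, and span-contraction techniques (as in \cite{PitSte2019}) rest on a minorization/mixing hypothesis like \eqref{A4}, which is explicitly \emph{not} assumed in this appendix setting. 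A continuous fixed point must instead be produced by the monotone value iteration $h_0:=e^{G}$, $h_{k+1}:=T_m h_k$, and continuity of the decreasing limit is itself the nontrivial point: it requires the quantitative tail estimate, coming from $g\geq c>0$ and $u_m\leq e^{\Vert G\Vert}$, that near-optimal stopping times satisfy $\bP_x(\tau>T)\leq e^{2\Vert G\Vert}e^{-cT}$ uniformly in $x$, so that the iteration converges uniformly. In Step 3, the equicontinuity bound $|u_m(x)-u_m(y)|\leq\sup_\tau|\E_x[\cdot]-\E_y[\cdot]|$ is asserted, not proved, and it is exactly the crux: the $C$-Feller property together with \eqref{A3} give no modulus of continuity uniform over all stopping times (and off the canonical space, ``the same $\tau$'' under $\E_x$ and $\E_y$ is not even defined); the cited source obtains continuity of $u_m$ and local-uniform convergence $u_m\to u$ (cf.\ Theorem~\ref{th:stopping_stability}) by direct estimates rather than Arzel\`a--Ascoli. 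Within the same step, the domination claim ``the whole exponent is bounded'' is false: $\int_0^\tau g(X_s)ds$ is unbounded when $\tau$ is; you must first reduce to bounded stopping times, which works because any $\tau$ with finite value has $\E_x\left[e^{\int_0^\tau g(X_s)ds}\right]<\infty$, so truncation at $T$ costs at most $e^{\Vert G\Vert}\E_x\left[e^{\int_0^\tau g(X_s)ds}1_{\{\tau>T\}}\right]\to 0$.

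Step 4 is also only heuristic at its key point: iterating ``equality in the DPP for small $h$ strictly before $\hat\tau$'' into the global identity $u(x)=\E_x\left[e^{\int_0^{t\wedge\hat\tau}g(X_s)ds}\,u(X_{t\wedge\hat\tau})\right]$ requires an actual mechanism (e.g.\ dyadic hitting times combined with the stability result, or a penalization scheme), and the claim $u(X_{\hat\tau})=e^{G(X_{\hat\tau})}$ at the debut of the closed set $\{u=e^{G}\}$ needs care for c\`adl\`ag paths when the set is approached along $t_n\uparrow\hat\tau$ (quasi-left-continuity). By contrast, your Step 1 is correct, and the finiteness/optimality scheme at the end of Step 4 is sound in spirit: once $(y(t\wedge\hat\tau))$ is a martingale, $u(x)\geq e^{ct}\,\bP_x(\hat\tau>t)$ because $u\geq 1$, forcing $\hat\tau<\infty$ a.s., and then Fatou together with the definition of $u$ as an infimum yields $u(x)=\E_x\left[e^{\int_0^{\hat\tau}g(X_s)ds+G(X_{\hat\tau})}\right]$ without any uniform integrability. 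In summary: the skeleton matches the cited proof, but the continuity of $u_m$ and the locally uniform passage $u_m\to u$ --- the actual content of the theorem --- are supported here by a tool that fails without \eqref{A4} (span contraction) and by an unproven uniform-over-$\tau$ equicontinuity assertion.
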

Now, let $(g_{m})$ and $(G_m)$ be sequences of functions from $C(E)$. Assume that $g_m\uparrow g$, as $m\to \infty$, with $g_0(\cdot)\geq c_0> 0$. Furthermore, let $G_m\to G$ uniformly, as $m\to \infty$, and $G_m(\cdot)\geq 0$, for any $m\geq 0$. Let us define the family of functions
\begin{align}
u_m(x)&:=\inf_{\tau \in \mathcal{T}_m} \mathbb{E}_x\left[\exp\left(\int_0^{\tau}g_m(X_s)ds+G_m(X_{\tau})\right)\right], \quad m\in \bN,\, x\in E,\label{eq:stopping_stability_1}\\
u(x)&:=\inf_{\tau \in \mathcal{T}} \mathbb{E}_x\left[\exp\left(\int_0^{\tau}g(X_s)ds+G(X_{\tau})\right)\right], \quad x\in E.\label{eq:stopping_stability_2}
\end{align}
Next theorem provides a convergence result for the optimal stopping problem. This is used in the proof of Theorem~\ref{th:existence_Bellman_continuous}.
\begin{theorem}[{\cite[Theorem 5.1]{JelPitSte2019a}}]\label{th:stopping_stability}
Let $u_m$ and $u$ be given by~\eqref{eq:stopping_stability_1} and~\eqref{eq:stopping_stability_2}, respectively. Then, $u_m \to u$ as $m\to\infty$ uniformly on compact sets.
\end{theorem}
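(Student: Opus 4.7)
I would establish pointwise convergence $u_m(x)\to u(x)$ by separately proving $\limsup_m u_m(x)\leq u(x)$ and $\liminf_m u_m(x)\geq u(x)$, then upgrade to uniform convergence on compact sets via uniform equicontinuity of $(u_m)$. Two preliminary bounds drive the analysis: taking $\tau=0$ in~\eqref{eq:stopping_stability_1} gives $1\leq u_m(x)\leq e^{\|G_m\|}$, so the family is uniformly bounded; and for any $\varepsilon$-optimal $\tau_m\in\mathcal{T}_m$ for $u_m(x)$, the condition $g_m\geq c_0>0$ together with $G_m\geq 0$ yields $\mathbb{E}_x[e^{c_0\tau_m}]\leq u_m(x)+\varepsilon$, a uniform exponential moment bound keeping near-optimal stopping times tight.

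\textbf{Upper bound.} Given $\varepsilon>0$ pick an $\varepsilon$-optimal $\tau^*\in\mathcal{T}$ for $u(x)$ (it exists by Theorem~\ref{th:stopping_continuity}) and set $\tau^*_m:=\lceil\tau^*/\delta_m\rceil\delta_m\in\mathcal{T}_m$, so $\tau^*_m\downarrow\tau^*$. Right-continuity of $X$, continuity of $G$, and $G_m\to G$ uniformly give $G_m(X_{\tau^*_m})\to G(X_{\tau^*})$ a.s.; $g_m\uparrow g$ with $g$ bounded forces $\int_0^{\tau^*_m}g_m\,ds\to\int_0^{\tau^*}g\,ds$ a.s. The integrands are dominated by $e^{\int_0^{\tau^*}g\,ds+\|g\|+\sup_m\|G_m\|}$, integrable because $G\geq 0$ gives $\mathbb{E}_x[e^{\int_0^{\tau^*}g\,ds}]\leq u(x)+\varepsilon$. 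Dominated convergence therefore produces
\[
u_m(x)\leq \mathbb{E}_x\!\left[e^{\int_0^{\tau^*_m}g_m\,ds+G_m(X_{\tau^*_m})}\right]\longrightarrow \mathbb{E}_x\!\left[e^{\int_0^{\tau^*}g\,ds+G(X_{\tau^*})}\right]\leq u(x)+\varepsilon.
\]

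\textbf{Lower bound (main obstacle).} Pick $\tau_m\in\mathcal{T}_m$ $\varepsilon$-optimal for $u_m(x)$; since $\mathcal{T}_m\subset\mathcal{T}$, $u(x)\leq \mathbb{E}_x[e^{\int_0^{\tau_m}g\,ds+G(X_{\tau_m})}]$, so it suffices to compare this with $\mathbb{E}_x[e^{\int_0^{\tau_m}g_m\,ds+G_m(X_{\tau_m})}]\leq u_m(x)+\varepsilon$. For fixed $\delta>0$ I would first pick $T$ and a compact $K\subset E$ so that, uniformly in $m$, $\mathbb{P}_x(\tau_m>T)\leq Ce^{-c_0T}\leq\delta$ (by Markov and the uniform exponential moment) and $\mathbb{P}_x(X_s\notin K\text{ for some }s\leq T)\leq\delta$ (by~\eqref{A3}). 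On the good event $\mathcal{G}:=\{\tau_m\leq T,\;X_s\in K\text{ for all }s\leq T\}$, Dini's theorem applied to $g_m\uparrow g$ on the fixed compact $K$ together with $\|G-G_m\|\to 0$ forces $\Delta_m:=\int_0^{\tau_m}(g-g_m)\,ds+(G-G_m)(X_{\tau_m})\to 0$ uniformly, hence $\mathbb{E}_x[B_m(e^{\Delta_m}-1)\mathbf{1}_{\mathcal{G}}]\to 0$ where $B_m:=e^{\int_0^{\tau_m}g_m\,ds+G_m(X_{\tau_m})}$; on $\mathcal{G}^c$, a moment-matching H\"older estimate combined with the smallness of $\mathbb{P}_x(\mathcal{G}^c)$ handles the tail. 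Letting $m\to\infty$ then $\delta\to 0$ gives $u(x)\leq\liminf_m u_m(x)+\varepsilon$.

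\textbf{From pointwise to uniform convergence.} Each $u_m\in C(E)$ by Theorem~\ref{th:stopping_continuity} applied in the dyadic setting. For uniform equicontinuity of $(u_m)$ on a compact $\Gamma\subset E$, I would couple trajectories started at nearby $x,y\in\Gamma$: the Feller property together with~\eqref{A3} keeps the paths close on a short time window with high probability, and boundedness of $g_m,G_m$ uniformly in $m$ yields a common modulus of continuity, while the tail beyond the window is controlled by the uniform exponential moment of near-optimal stopping times. Arzel\`a-Ascoli combined with the pointwise convergence then delivers uniform convergence on $\Gamma$. The principal technical difficulty is the lower bound, where the only-pointwise monotone convergence $g_m\uparrow g$ must be reconciled with potentially unbounded near-optimal stopping times; the combination of Dini-localization on the compact exhaustion from~\eqref{A3} and the uniform exponential moment is the essential ingredient that closes the gap.
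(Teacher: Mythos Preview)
The paper does not supply its own proof of this statement: Theorem~\ref{th:stopping_stability} is quoted from \cite[Theorem 5.1]{JelPitSte2019a} in Appendix~\ref{S:stopping} under the blanket remark ``For brevity we omit the proofs.''  There is therefore no in-paper argument to compare against, and your sketch must be assessed on its own merits.

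Your upper bound is sound: rounding a near-optimal $\tau^*$ for $u$ up to the $\delta_m$-grid and invoking dominated convergence (with the dominant built from $\mathbb{E}_x[e^{\int_0^{\tau^*}g}]\leq u(x)+\varepsilon$ and the uniform bounds on $G_m$) is exactly the right mechanism.  The equicontinuity/Arzel\`a--Ascoli upgrade is also the natural route to uniformity once pointwise convergence is secured.

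The genuine gap is in the lower bound.  You assert that ``on $\mathcal{G}^c$, a moment-matching H\"older estimate combined with the smallness of $\mathbb{P}_x(\mathcal{G}^c)$ handles the tail,'' but the only moment bound at your disposal is $\mathbb{E}_x[e^{c_0\tau_m}]\leq u_m(x)+\varepsilon$.  A H\"older estimate on
\[
\mathbb{E}_x\!\left[e^{\int_0^{\tau_m}g(X_s)\,ds+G(X_{\tau_m})}\mathbf{1}_{\mathcal{G}^c}\right]
\]
would require control of $\mathbb{E}_x[e^{p\int_0^{\tau_m}g}]$ for some $p>1$, and since the hypotheses give only $g\geq c_0$ with $\|g\|$ possibly much larger than $c_0$, no such bound follows from what you have.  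Truncating to $\tau_m\wedge T$ does not rescue the argument either: on $\{\tau_m>T\}$ the crude estimate $e^{\int_0^{T}g+G(X_T)}\leq e^{T\|g\|+\|G\|}$ multiplied by $\mathbb{P}_x(\tau_m>T)\leq Ce^{-c_0T}$ yields a factor $e^{(\|g\|-c_0)T}$, which diverges as $T\to\infty$ whenever $g$ is not identically $c_0$.  The underlying obstruction is that $g_m\uparrow g$ is only \emph{pointwise}; Dini gives uniformity on the compact $K$, but on $\{\exists\,s\leq T:\;X_s\notin K\}\cup\{\tau_m>T\}$ the gap $g-g_m$ along the trajectory is not controlled, and this is precisely where your $\varepsilon$-optimal $\tau_m$ may accumulate mass.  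Closing the lower bound requires an additional device---for instance, first reducing to the intermediate quantity $\inf_{\tau\in\mathcal{T}_m}\mathbb{E}_x[e^{\int_0^\tau g+G(X_\tau)}]$ (pure grid refinement with the \emph{limit} costs, where the lower bound is trivial), and handling the passage from $(g_m,G_m)$ to $(g,G)$ separately via the submartingale structure of Theorem~\ref{th:stopping_continuity}---rather than a bare H\"older inequality.
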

Now, fix $T\geq 0$ and let the functions $g:[0,T]\times E\mapsto \bR$ and $G:[0,T]\times E\mapsto \bR$ be continuous and bounded.\footnote{Note that here we do not assume nonnegativity of $g$ and $G$. This is because now we act in the finite time horizon setting; see~\cite[Remark 4.5]{JelPitSte2019a}.} For any $t\in [0,T]$, $h\in [0,t]$, and $x\in E$, let
\begin{align}
u(t,x) & :=\inf_{\tau\leq T-t}\bE_x\left[\exp\left(\int_0^\tau g(s+t,X_s)ds+G(\tau+t,X_\tau)\right)\right].\label{eq:time_inh_stop_2}
\end{align}
Next proposition could be seen as an extension of Proposition 4.2 and Proposition 4.3 from~\cite{JelPitSte2019a} to the case when the cost functions depend on the time parameter. This proposition is used in Appendix~\ref{S:finite_proof}.
\begin{proposition}\label{pr:time_dep_stopping}
The function $u$ given by~\eqref{eq:time_inh_stop_2} is jointly continuous and bounded. Moreover, the optimal stopping time for $u(t,x)$ is given by
\begin{equation}\label{optstt}
\tau_{t}:=\inf\left\{s\geq 0:u(t+s,X_s)=e^{G(t+s,X_s)}\right\}.
\end{equation}
\end{proposition}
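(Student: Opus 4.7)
The plan is to reduce the time-inhomogeneous stopping problem to the time-homogeneous framework of \cite{JelPitSte2019a} via the standard space-time embedding, then mirror the arguments of Propositions 4.2 and 4.3 therein. I consider the augmented Markov process $Y_s := (t+s, X_s)$ on $[0,T] \times E$; since the first coordinate evolves deterministically, $Y$ inherits the $C$-Feller property and the distance-control condition \eqref{A3} from $X$. On the enlarged space we have $u(t,x) = \inf_{\tau \leq T-t} \mathbb{E}_{(t,x)}[\exp(\int_0^\tau \tilde g(Y_s)ds + \tilde G(Y_\tau))]$, where $\tilde g, \tilde G$ are the natural lifts of $g, G$. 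Note that $u(T,x) = e^{G(T,x)}$, since only $\tau = 0$ is admissible at the horizon, so $\tau_t \leq T-t$ always. Boundedness is immediate from $|u(t,x)| \leq T\|g\| + \|G\|$.

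For joint continuity, I fix $(t_0, x_0)$ and $\varepsilon > 0$, pick an $\varepsilon$-optimal stopping time $\tau_\varepsilon \leq T - t_0$ for $u(t_0, x_0)$, and for nearby $(t,x)$ use $\tau_\varepsilon \wedge (T-t)$ as a test stopping time for $u(t,x)$. The resulting error splits into three parts: (i) sensitivity to the spatial starting point $x_0 \to x$, handled by the $C$-Feller property combined with \eqref{Co}; (ii) sensitivity to the time shift $g(s+t_0, \cdot) \to g(s+t, \cdot)$ in the integrand, handled by uniform continuity of $g$ on compacts $[0,T] \times K$ where $K$ contains the paths with high probability, thanks to \eqref{A3}; (iii) horizon truncation $\tau_\varepsilon \wedge (T-t) \neq \tau_\varepsilon$, which contributes only near the horizon and is controlled by boundedness of $g, G$ together with the small-time estimate in \eqref{Co}. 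A symmetric bound obtained by swapping the roles of $(t_0, x_0)$ and $(t,x)$ yields joint continuity.

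The optimal stopping characterization \eqref{optstt} then follows the Snell envelope argument of \cite[Proposition 4.3]{JelPitSte2019a} transported to the augmented process. Setting $M_s := \exp(\int_0^s g(t+r, X_r) dr)\, u(t+s, X_s)$ for $s \in [0, T-t]$, the dynamic programming principle (now valid pointwise by continuity of $u$) implies that $M$ is a submartingale and $(M_{s \wedge \tau_t})$ is a martingale. Since $u(t+\tau_t, X_{\tau_t}) = e^{G(t+\tau_t, X_{\tau_t})}$ by continuity of $u$ and $G$ combined with the definition of $\tau_t$, evaluating the martingale identity at $s = T - t$ yields $u(t,x) = \mathbb{E}_x[\exp(\int_0^{\tau_t} g(r+t, X_r)dr + G(t+\tau_t, X_{\tau_t}))]$, confirming optimality.

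The main obstacle is the joint continuity. Spatial regularity is standard from the Feller property, and pointwise time-regularity from continuity of $g, G$ in the time variable, but combining them uniformly near the horizon $t \to T$ requires care: the $\varepsilon$-optimal stopping time for $u(t_0, x_0)$ can cluster at $T - t_0$, and the truncation error must be shown to vanish as $t \to t_0$. The distance-control assumption \eqref{A3} together with uniform continuity of $g$ and $G$ on the relevant compacts provides the ingredients needed to make this estimate work, in the spirit of \cite[Proposition 4.2]{JelPitSte2019a}.
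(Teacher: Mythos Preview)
Your approach is essentially the same as the paper's: both reduce the time-inhomogeneous problem to the time-homogeneous setting of \cite{JelPitSte2019a} via the space-time embedding $Y_s=(t+s,X_s)$, and then invoke the continuity and optimal-stopping results proved there. The only difference is presentational: the paper, after constructing the space-time process on the product filtered space and checking it is standard Feller--Markov, simply cites Proposition~4.3, Remark~4.4 and Remark~4.5 of \cite{JelPitSte2019a} to conclude, whereas you sketch in more detail how those cited arguments ($\varepsilon$-optimal stopping times for continuity, Snell-envelope martingale/submartingale structure for optimality) would actually run. One minor slip: your boundedness line should read $e^{-T\|g\|-\|G\|}\le u(t,x)\le e^{T\|g\|+\|G\|}$ rather than $|u(t,x)|\le T\|g\|+\|G\|$, since $u$ is an expectation of an exponential, not its logarithm; this does not affect the argument.
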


\begin{proof}
For completeness, since Proposition~\ref{pr:time_dep_stopping} is not proved directly in~\cite{JelPitSte2019a}, we decided to outline the proof. We apply a space enlargement technique to use the results from~\cite{JelPitSte2019a}. Let $(\widetilde{\Omega},\widetilde{\cF},\widetilde{\mathbb{F}})$ be a filtered product space, where $\widetilde{\Omega}:=[0,\infty)\times \Omega$, the $\sigma$-field is given by $\widetilde{\cF}:=\mathcal{B}[0,\infty)\otimes \cF$,  and the filtration $\widetilde{\mathbb{F}}:=(\widetilde{\cF}_t)_{t\in [0,\infty)}$, is defined as $\widetilde{\cF}_t:=\mathcal{B}[0,\infty)\otimes \cF_t$.
Also, let $\widetilde{E}:=[0,\infty)\times E$ and $\widetilde{\mathcal{E}}:=\mathcal{B}[0,\infty)\otimes \mathcal{E}$. Let us define the space-time process $\widetilde{X}=(\widetilde{X}_t)_{t\in [0,\infty)}$ by
\begin{equation}\label{eq:def_space_time}
\widetilde{X}_t(s,\omega):=(t+s,X_t(\omega)), \quad t\geq 0,(s,\omega)\in [0,\infty)\times \Omega.
\end{equation}
Using classic argument one can show that $\widetilde{X}$ is a standard Markov process on the product filtered probability space $(\widetilde{\Omega},\widetilde{\cF},\widetilde{\mathbb{F}},\widetilde{\mathbb{P}})$ that takes values in $(\widetilde{E},\widetilde{\mathcal{E}})$ and satisfies $C_0(\widetilde{E})$-Feller property; the measure $\widetilde{\mathbb{P}}$ is given via the kernel transition functions corresponding to the family $(\widetilde{\mathbb{P}}_{(t,x)})_{(t,x)}$, where $\widetilde{\mathbb{P}}_{(t,x)}:=\delta_t \otimes \mathbb{P}_x$, $\delta_t$ denotes the Dirac measure, $t\in [0,\infty)$, and $x\in E$. Note that $\widetilde{\mathbb{P}}_{(t,x)}$ might be linked to the distribution of $\widetilde{X}$ starting from $(t,x)$. We refer to~\cite[Section 1.4.6]{Shi1978} for a discussion of a similar homogenization procedure.

Note that in this setting, for any $t\in [0,T]$ and $x\in E$, we get
\[
u(t,x):=\inf_{\tau \leq T-t} \widetilde{\bE}_{(t,x)} \left[ \exp\left(\int_0^\tau g(\widetilde{X}_s)ds+G(\widetilde{X}_\tau)\right)\right].
\]
Thus, using~Proposition 4.3, Remark 4.4, and Remark 4.5 from \cite{JelPitSte2019a}, we get that $(t,x)\mapsto u(t,x)$ is jointly continuous and the optimal stopping time for $u(t,x)$ is given by \eqref{optstt}, which concludes the proof.
\end{proof}

\section{Impulse control on finite horizon}\label{S:finite_proof} In this section we study the finite time horizon impulse control problem given in \eqref{eq:finite_horizon}, i.e.
\begin{equation}\label{eq:finite_horizon2}
J_T(x,V):=\ln \E_{(x,V)} \left[\exp\left({\int_0^T f(X_s) ds + \sum_{i=1}^{\infty} 1_{\{\tau_i\leq T\}}  c(X_{\tau_i^-},\xi_i)}\right)\right],
\end{equation}
where $T\geq 0$, $x\in E$, $V\in \bV$, and with the remaining notation aligned with~\eqref{eq:RSC_problem_min}.
The main goal of this section is to show properties that are useful when proving Proposition~\ref{pr:finite_horizon_convergence}. Before we present the results, let us introduce some auxiliary notation. Let us fix $T\in \mathbb{N}$ and define the family $(w^n)_{n\in\bN}$ of functions $w^n\colon [0,T]\times E \to \bR$ given recursively, for $n\in \bN$, by
\begin{align}\label{eq:Bellman_finite}
w^0(t,x)& :=\ln \bE_x\left[\exp\left(\int_0^{T-t} f(X_s)ds\right)\right],\nonumber\\
w^{n}(t,x)& :=\inf_{\tau\leq T-t}\ln \bE_x\left[\exp\left(\int_0^\tau f(X_s)ds+\widetilde{M}w^{n-1}(t+\tau,X_\tau)\right)\right],
\end{align}
where $\widetilde{M}\colon C([0,T]\times E )\to C([0,T]\times E )$ is given by
\[
\widetilde{M} h(t,x):=\min\left(\inf_{\xi\in U}\left(c(x,\xi)+h(t,\xi)\right),h(t,x)\right).
\]
Similarly, for any $m\in \bN$, we define the family $(w_m^n)_{n\in\bN}$ recursively, for $n\in\bN$, by
\begin{align}\label{eq:Bellman_finite_discrete}
w^0_m(t,x)& :=\ln \bE_x\left[\exp\left(\int_0^{T-t} f(X_s)ds\right)\right],\nonumber\\
w^{n}_m(t,x)& :=\inf_{\substack{\tau\leq T-t\\ \tau \in \mathcal{T}^m}}\ln \bE_x\left[\exp\left(\int_0^\tau f(X_s)ds+\widetilde{M}w^{n-1}_m(t+\tau,X_\tau)\right)\right].
\end{align}
In the following, we link $w^n$ and $w^n_m$ to~\eqref{eq:finite_horizon}. Before we do that, we show continuity and the optimal stopping rule for $w^n$.

\begin{lemma}\label{lm:cont_w_hetero}
For any $n\in \bN$, the map $(t,x)\mapsto w^n(t,x)$ defined by~\eqref{eq:Bellman_finite} is jointly continuous. Furthermore, if $n\geq 1$, then the optimal stopping time for $w^n(t,x)$ is given by
\begin{equation}\label{eq:opt_stop_hetero}
\sigma^n(t):=\inf\left\{s\geq 0:w^{n}(t+s,X_s)=\widetilde{M}w^{n-1}(t+s,X_s)\right\}.
\end{equation}
\end{lemma}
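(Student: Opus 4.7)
The plan is to proceed by induction on $n$, reducing the inductive step to a direct application of Proposition~\ref{pr:time_dep_stopping}.

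\textbf{Base case $n=0$.} I would show that $(t,x)\mapsto w^0(t,x)=\ln\bE_x[\exp(\int_0^{T-t}f(X_s)ds)]$ is jointly continuous. Given $(t_n,x_n)\to(t_0,x_0)$, I would split
\[
\bE_{x_n}\!\!\left[e^{\int_0^{T-t_n}f(X_s)ds}\right]-\bE_{x_0}\!\!\left[e^{\int_0^{T-t_0}f(X_s)ds}\right]
\]
into a time-variation piece (same starting point, exponents differ by at most $\|f\|\cdot|t_n-t_0|$, handled by bounded convergence) and a space-variation piece with fixed time horizon $T-t_0$, which is controlled by the $C$-Feller property through the standard Feynman--Kac continuity statement \cite[Proposition~2.1]{JelPitSte2019a}. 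Monotonicity and boundedness of $\ln$ away from zero (thanks to boundedness of $f$) yields continuity of $w^0$.

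\textbf{Inductive step.} Assume $w^{n-1}\in C([0,T]\times E)$. First I would verify that $\widetilde{M}w^{n-1}\in C([0,T]\times E)$. The outer $\min$ preserves continuity, so it suffices to show joint continuity of $(t,x)\mapsto \inf_{\xi\in U}\bigl(c(x,\xi)+w^{n-1}(t,\xi)\bigr)$. Since $U$ is compact, $c$ is jointly continuous by~\eqref{A2}, and $w^{n-1}$ is jointly continuous by the inductive hypothesis, the integrand $c(x,\xi)+w^{n-1}(t,\xi)$ is jointly continuous in $(t,x,\xi)$ on the compact slice, so Berge's maximum theorem (or a direct $\varepsilon$-argument using uniform continuity on compacts) gives the required continuity in $(t,x)$. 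Boundedness is immediate from boundedness of $c$ and of $w^{n-1}$.

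\textbf{Applying Proposition~\ref{pr:time_dep_stopping}.} With $g(s,x):=f(x)$ (time-independent, continuous and bounded by~\eqref{A1}) and $G(s,x):=\widetilde{M}w^{n-1}(s,x)$ (continuous and bounded by the previous paragraph; the footnote in Proposition~\ref{pr:time_dep_stopping} allows signed $g$ and $G$), the quantity $u(t,x)=\inf_{\tau\leq T-t}\bE_x[\exp(\int_0^\tau g(s+t,X_s)ds+G(t+\tau,X_\tau))]$ equals $e^{w^n(t,x)}$ by definition~\eqref{eq:Bellman_finite}. Proposition~\ref{pr:time_dep_stopping} then gives joint continuity of $u$, hence of $w^n=\ln u$, together with the optimal stopping rule $\tau_t=\inf\{s\geq 0: u(t+s,X_s)=e^{G(t+s,X_s)}\}$, which translates into~\eqref{eq:opt_stop_hetero} upon taking logarithms.

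\textbf{Main obstacle.} Everything is routine once the right cite is selected; the only nontrivial bookkeeping is the continuity of $\widetilde{M}w^{n-1}$ (compactness of $U$ is essential here, as is joint continuity of $w^{n-1}$), and matching the logarithmic formulation of~\eqref{eq:Bellman_finite} with the exponential one in Proposition~\ref{pr:time_dep_stopping}. The base case requires one explicit decomposition argument because Proposition~\ref{pr:time_dep_stopping} is phrased as an optimization problem whereas $w^0$ involves no stopping.
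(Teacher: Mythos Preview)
Your proposal is correct and follows essentially the same route as the paper: induction on $n$, a direct time/space decomposition for the base case $w^0$ using boundedness of $f$ and the Feller property, and for the inductive step joint continuity of $\widetilde{M}w^{n-1}$ (via compactness of $U$) followed by an application of Proposition~\ref{pr:time_dep_stopping}. The paper's proof differs only in presentation: it makes the Lipschitz-in-$t$ estimate for $w^0$ explicit rather than invoking bounded convergence, and states the continuity of $\widetilde{M}w^{n-1}$ without naming Berge's theorem.
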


\begin{proof} We proceed by induction. First, we prove that the map $(t,x)\mapsto w^0(t,x)$ is jointly continuous. Noting that
\begin{align*}
|\ln z-\ln y| \leq \frac{1}{\min(z,y)}|z-y|, \, z,y>0, \quad \text{and} \quad
|e^z-e^y| \leq e^{\max(z,y)} |z-y|, \, z,y\in\bR,
\end{align*}
and using boundedness of $f$, for $t,s\in [0,T]$, $y\in E$, and $L:=e^{2 T\Vert f\Vert} \Vert f\Vert$, we get
\[
|w^0(t,y)-w^0(s,y)| \leq e^{T\Vert f\Vert}  \bE_y\left| e^{\int_0^{T-t} f(X_u)du}- e^{ \int_0^{T-s} f(X_u)du}\right| \leq  L \vert t-s\vert.
\]
By Feller property (see also Part 4 of~\cite[Proposition 2.1]{JelPitSte2019a}) we get that the map $x\mapsto w^0(t,x)$ is continuous for any fixed $t\in [0,T]$. Thus, for any sequence $((t_k,x_k))_{k\in\bN}$ converging to $(t,x)$, we get
\begin{align*}\label{eq:w^0_m_cont}
|w^0(t_k,x_k)-w^0(t,x)|& \leq |w^0(t_k,x_k)-w^0(t,x_k)|+|w^0(t,x_k)-w^0(t,x)|\to 0,
\end{align*}
as $k\to\infty$.

Second, we show continuity and the optimal stopping time for $w^n$, when $n\geq 1$. Combining induction assumption with compactness of $U$ we get joint continuity of the map $(t,x)\mapsto\widetilde{M}w^{n-1}(t,x)$. Thus, using Proposition~\ref{pr:time_dep_stopping}, we get joint continuity of $(t,x)\mapsto w^n(t,x)$ and optimality of $\sigma^n(t)$ for any $t\in [0,T]$, which concludes the proof.
\end{proof}
Now, we link $w^n$ and $w^n_m$ to the value function $J_T$.
\begin{lemma}\label{lm:finite_horizon_Bellman}
For any $n,m\in \bN$, and $x\in E$, we get $w^n(0,x)=\inf_{V\in \bV^n}J_T(x,V)$ and $w^n_m(0,x)=\inf_{V\in \bV^n_m}J_T(x,V)$.
\end{lemma}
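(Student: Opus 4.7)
The plan is to prove, by induction on $n$, the stronger assertion that for all $t \in [0,T]$ and $x \in E$,
\[
w^n(t,x) = \inf_{V \in \bV^n} \ln \bE_{(x,V)}\!\left[e^{\int_0^{T-t} f(X_s)\,ds + \sum_{i} 1_{\{\tau_i \leq T-t\}} c(X_{\tau_i^-},\xi_i)}\right],
\]
with the same identity for $w^n_m$ after restricting all stopping times to $\mathcal{T}_m$; by the time-homogeneity of $X$, $f$, and $c$, the right-hand side coincides with $\inf_{V \in \bV^n} J_{T-t}(x,V)$, and the lemma is the case $t=0$. The base case $n=0$ is immediate since $\bV^0$ is just the no-impulse strategy. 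I would also record the trivial fact that $w^k(T,\cdot) \equiv 0$ for every $k$ (propagated along the recursion using $c \geq c_0 > 0$, which forces $\widetilde{M}w^{k-1}(T,\cdot) = 0$); this streamlines the terminal contribution in the argument below.

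For the inductive step, the inequality $w^n(t,x) \leq J_{T-t}(x,V)$ for $V=(\tau_i,\xi_i) \in \bV^n$ is obtained by setting $\tau^\star := \tau_1 \wedge (T-t)$—a stopping time bounded by $T-t$, and dyadic in the $\bV^n_m$ case because $T \in \bN$—and splitting the controlling expectation into $\{\tau_1 > T-t\}$ (no impulse occurs, matched by $\widetilde{M}w^{n-1}(T,\cdot) = 0$) and $\{\tau_1 \leq T-t\}$. On the latter event, the strong Markov property at $\tau_1$ factors the expectation into a pre-$\tau_1$ part and the cost of the residual strategy $V'' := (\tau_i - \tau_1, \xi_i)_{i \geq 2} \in \bV^{n-1}$ starting from $\xi_1$ on horizon $T - t - \tau_1$; the induction hypothesis then gives $\ln \bE_{(\xi_1,V'')}[\cdots] \geq w^{n-1}(t+\tau_1,\xi_1)$, and combining with $c(X_{\tau_1^-},\xi_1) + w^{n-1}(t+\tau_1,\xi_1) \geq \widetilde{M}w^{n-1}(t+\tau_1,X_{\tau_1^-})$ recovers exactly the integrand defining $w^n(t,x)$, evaluated at the admissible stopping time $\tau^\star$. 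For the reverse inequality, I would take the optimal stopping time $\sigma^n(t)$ provided by Lemma~\ref{lm:cont_w_hetero}, perform a measurable selection of an $\varepsilon$-optimal $\xi^\star \in U$ in $\widetilde{M}w^{n-1}(t+\sigma^n(t),X_{\sigma^n(t)})$ (taking the impulse branch when it attains the minimum and skipping otherwise), paste in an $\varepsilon$-optimal residual strategy $V'' \in \bV^{n-1}$ supplied by the induction hypothesis on horizon $[\sigma^n(t),T-t]$, and verify by another application of the strong Markov property that the concatenation $(\sigma^n(t),\xi^\star,V'')$ lies in $\bV^n$ (respectively $\bV^n_m$) and has cost within $O(\varepsilon)$ of $w^n(t,x)$; letting $\varepsilon \downarrow 0$ closes the step.

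The main technical obstacle is the measurable selection of $\xi^\star$ and of the residual strategy $V''$ as functions of the random state at $\sigma^n(t)$, needed so that the concatenation is an admissible impulse control. Compactness of $U$, continuity of $c$, and joint continuity of $w^{n-1}$ and $\widetilde{M}w^{n-1}$ (from Lemma~\ref{lm:cont_w_hetero}) make this a standard application of measurable-selection theorems, but it is the step demanding explicit justification. A minor bookkeeping issue, specific to the dyadic identity for $w^n_m$, is checking that all constructed stopping times lie in $\mathcal{T}_m$; because $T \in \bN$ and the inner infima defining $w^n_m$ already range over $\mathcal{T}_m$, both $\tau^\star$ and $\sigma^n(t)$ stay dyadic, so the dyadic case follows by a verbatim transcription of the general argument.
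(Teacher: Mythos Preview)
Your proposal is correct and follows the same essential idea as the paper: use the optimal stopping times $\sigma^{n}(t)$ from Lemma~\ref{lm:cont_w_hetero} to build a strategy attaining $w^n$, and verify that any $V\in\bV^n$ gives a larger cost by feeding its first impulse time into the infimum defining $w^n$ and invoking the recursion.

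The main presentational difference is that you run an induction on $n$ (carrying general $t\in[0,T]$), gluing at each step an $\varepsilon$-optimal residual $(n-1)$-impulse strategy onto $\sigma^n(t)$ via measurable selection; the paper instead unrolls the whole recursion at once, defining $\widetilde\tau_i:=\sigma^{n-i+1}(\widetilde\tau_{i-1})\circ\Theta_{\widetilde\tau_{i-1}}+\widetilde\tau_{i-1}$ and $\widetilde\xi_i:=\argmin_{\xi\in U\cup\{X_{\widetilde\tau_i^-}\}}(\bar c(X_{\widetilde\tau_i^-},\xi)+w^{n-i}(\widetilde\tau_i,\xi))$, and then showing directly that the resulting strategy $\hat V$ satisfies $w^n(0,x)=J_T(x,\hat V)$. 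Because the paper takes exact $\argmin$'s at every level (compactness of $U$ and continuity of the data make these exist), it sidesteps the selection issue you flag for the residual strategy $V''$; conversely, your inductive formulation is self-contained, whereas the paper delegates the inequality $w^n(0,x)\leq\inf_{V\in\bV^n}J_T(x,V)$ to \cite[Proposition~2.3]{SadSte2002}. One small caveat on the dyadic transcription: Lemma~\ref{lm:cont_w_hetero} is stated only for $w^n$, so for $w^n_m$ you should invoke the elementary fact that the finite-horizon discrete-time optimal stopping problem on the grid $\{0,\delta_m,\dots\}$ has an optimal stopping time, rather than citing $\sigma^n(t)$ directly.
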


\begin{proof}
For brevity, we present the proof only for $w^n$; the proof for $w^n_m$ is analogous. Clearly, for $n=0$, the claim is straightforward so we can assume that $n\geq 1$.

First, we show that $w^n(0,x)=J_T(x,\hat{V})$, where $\hat{V}\in \bV^n$ is a specific strategy given below. To ease the notation, we set $\bar{c}(x,\xi):=1_{\{x\neq \xi\}} c(x,\xi)$, $x\in E$, $\xi\in U$. Then, recalling that $c(x,\xi)\geq c_0 >0$, we get $\widetilde{M}h(t,x)=\inf_{\xi\in U\cup\{x\}}\left( \bar{c}(x,\xi)+h(t,\xi)\right).$ Recalling~\eqref{eq:opt_stop_hetero}, for any $i=1,2,\ldots, n$, let us define recursively\footnote{For simplicity, we assume that there are unique minimizers in $\widetilde \xi_i$ and later in $\hat \xi_i$.}
\begin{align*}
\widetilde\tau_i & := \sigma^{n-i+1}(\widetilde{\tau}_{i-1})\circ\Theta_{\widetilde{\tau}_{i-1}}+\widetilde{\tau}_{i-1},\\
\widetilde \xi_i & := \argmin_{\xi\in U\cup\{ X_{\widetilde\tau_i^-} \}}\left( \bar{c}(X_{\widetilde\tau_i^-},\xi)+w^{n-i}(\widetilde\tau_i,\xi)\right),
\end{align*}
where $\widetilde\tau_0\equiv 0$ and $\Theta$ denotes Markov shift operator. Next, we define the strategy $\hat{V}:=(\hat\tau_i,\hat\xi_i)_{i=1}^n\in \bV^n$, where
\begin{align*}
\hat\tau_i &:= \inf \left\{ \widetilde\tau_j\geq \hat\tau_{i-1}: \inf_{\xi\in U}\left( c(X_{\widetilde\tau_j^-},\xi)+w^{n-j}(\widetilde\tau_j,\xi)\right)<w^{n-j}(\widetilde\tau_j,X_{\widetilde\tau_j^-})\right\},\\
\hat\xi_i &:= \argmin_{\xi\in U}\left( c(X_{\hat\tau_i^-\wedge T},\xi)+w^{n-i}(\hat\tau_i\wedge T,\xi)\right),
\end{align*}
is defined recursively with $\hat\tau_0\equiv 0$; we adapt the standard convention $\inf\{\emptyset\}=+\infty$. Note that $(\hat \tau_i)$ is a modification of $(\widetilde \tau_i)$ which takes into account only the situations where the process is shifted.
Recalling Lemma~\ref{lm:cont_w_hetero} and using recursive arguments combined with strong Markov property, we get
\begin{align*}
w^n(0,x)&=\ln \E_{(x,\hat{V})}\left[\exp\left(\int_0^{\widetilde{\tau}_1} f(X_s)ds+\bar{c}(X_{\widetilde\tau_1^-},\widetilde{\xi}_1)+w^{n-1}(\widetilde\tau_1,\widetilde\xi_1)\right)\right]\\
& =\ln \E_{(x,\hat{V})}\left[\exp\left(\int_0^{T} f(X_s)ds+\sum_{i=1}^n\bar{c}(X_{\widetilde\tau_i^-},\widetilde{\xi}_i)\right)\right]\\
& =\ln \E_{(x,\hat{V})}\left[\exp\left(\int_0^{T} f(X_s)ds+\sum_{i=1}^n 1_{\{\hat\tau_i\leq T\}}c(X_{\hat\tau_i^-},\hat{\xi}_i)\right)\right],
\end{align*}
which concludes the proof of $w^n(0,x)=J_T(x,\hat{V})$.

Second, using similar argument one can show that $w^n(0,x)\leq \inf_{V\in \bV^n}J_T(x,V)$; see~\cite[Proposition 2.3]{SadSte2002} for details. Thus, $w^n(0,x)= \inf_{V\in \bV^n}J_T(x,V)$, which concludes the proof.
\end{proof}
Next, we show that the impulse control problem with finitely many impulses may be approximated by the optimization problems on the discrete time-grid.
\begin{lemma}\label{lm:finite_horizon_conv}
Let $n\in \bN$ and $\Gamma\subset E$ be a compact set. Then $w^n_m(t,x)\to w^n(t,x)$ as $m\to \infty$ uniformly in $(t,x)\in[0,T]\times \Gamma$.
\end{lemma}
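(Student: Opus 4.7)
The plan is to induct on $n$, with inductive hypothesis that $w^{n-1}_m\to w^{n-1}$ uniformly on $[0,T]\times K$ for every compact $K\subset E$; the base case $n=0$ is immediate since $w^0_m=w^0$. A parallel induction yields $w^n_m\ge w^n$ pointwise: $w^{n-1}_m\ge w^{n-1}$ together with monotonicity of $\widetilde M$ give $\widetilde Mw^{n-1}_m\ge \widetilde Mw^{n-1}$, and $\cT^m\subset \cT$, so the infimum defining $w^n_m$ is taken over a smaller class of stopping times against a larger integrand. It therefore suffices to produce a matching upper bound that vanishes uniformly in $(t,x)\in[0,T]\times \Gamma$.

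For the upper bound, I would invoke Lemma~\ref{lm:cont_w_hetero} to obtain the optimal continuous-time rule $\sigma^n(t)$ and dyadify it as $\tau_m:=\bigl(\lceil\sigma^n(t)/\delta_m\rceil\delta_m\bigr)\wedge\bigl(\lfloor(T-t)/\delta_m\rfloor\delta_m\bigr)$. This is an $\bF$-stopping time taking dyadic values with $\tau_m\le T-t$ and $|\tau_m-\sigma^n(t)|\le 2\delta_m$. Using $\tau_m$ as a feasible competitor and exploiting the uniform bounds on $f$, $c$, $\widetilde Mw^{n-1}_m$ and $\widetilde Mw^{n-1}$, the elementary estimates $|\ln a-\ln b|\le e^{C}|a-b|$ and $|e^u-e^v|\le e^{C}|u-v|$ (with a common constant $C$) reduce the difference $w^n_m(t,x)-w^n(t,x)$ to a constant multiple of $\bE_x|Y_m-Y|$, where $Y_m:=\int_0^{\tau_m} f(X_s)\,ds+\widetilde Mw^{n-1}_m(t+\tau_m,X_{\tau_m})$ and $Y$ is defined analogously at $\sigma^n(t)$.

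The main obstacle is to force $\bE_x|Y_m-Y|\to 0$ uniformly over $(t,x)\in[0,T]\times\Gamma$. Given $\vep>0$, I would use the first regime of~\eqref{A3} to pick $R>0$ with $\sup_{x\in\Gamma}\bP_x(\sup_{s\le T}\rho(X_s,x)>R)<\vep$ and work on the compact trapping set $B:=B(N_0+R)$, where $N_0:=\sup_{x\in\Gamma}\|x\|$; the path then stays in $B$ with probability at least $1-\vep$ uniformly in $x\in\Gamma$. On this good event I would split $|Y_m-Y|$ into (i) the integral tail, bounded by $2\|f\|\delta_m$, (ii) $|\widetilde Mw^{n-1}_m(t+\tau_m,X_{\tau_m})-\widetilde Mw^{n-1}(t+\tau_m,X_{\tau_m})|$, which tends to $0$ uniformly by the inductive hypothesis applied on the compact set $B\cup U$, and (iii) $|\widetilde Mw^{n-1}(t+\tau_m,X_{\tau_m})-\widetilde Mw^{n-1}(t+\sigma^n(t),X_{\sigma^n(t)})|$, controlled via uniform continuity of $\widetilde Mw^{n-1}$ on $[0,T]\times B$ (which follows from Lemma~\ref{lm:cont_w_hetero}) together with a strong Markov argument at $\sigma^n(t)$: for any $\eta>0$, $\bP_x(\rho(X_{\tau_m},X_{\sigma^n(t)})>\eta)\le M_{B}(\delta_m,\eta)+\vep\to\vep$ as $m\to\infty$, by the second regime of~\eqref{A3}. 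On the bad event a uniform deterministic bound on $|Y_m-Y|$ contributes only $O(\vep)$; sending first $m\to\infty$ and then $\vep\to 0$ closes the induction.

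I expect the delicate point to be the coordinated use of both regimes of~\eqref{A3}: the radius $R$ must be fixed before $m$ so that the compact $B$ is independent of $m$ and the inductive uniform convergence on $[0,T]\times B$ can be invoked, while $\delta_m\downarrow 0$ must simultaneously kill the path-oscillation bound $M_B(\delta_m,\eta)$ and the continuity modulus of $\widetilde Mw^{n-1}$ on the compact $[0,T]\times B$. All error bounds then depend on $x\in\Gamma$ only through this uniform probability estimate, which secures the required uniform convergence.
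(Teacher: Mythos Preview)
Your proposal is correct and follows essentially the same route as the paper's proof: induction on $n$, dyadification of a (near-)optimal continuous-time stopping rule, use of the first regime of~\eqref{A3} to confine the path to a compact set $B$ independent of $m$, then the inductive hypothesis on $[0,T]\times(B\cup U)$ together with the second regime of~\eqref{A3} and strong Markov to kill the oscillation term. The only cosmetic differences are that the paper works multiplicatively with $h=\exp(w)$ rather than additively with $w$, and uses an $\varepsilon$-optimal stopping time rather than the optimal $\sigma^n(t)$; since Lemma~\ref{lm:cont_w_hetero} supplies the optimal rule, your choice is legitimate.
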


\begin{proof}
Let us fix a compact set $\Gamma\subset E$ and proceed by induction. The claim for $n=0$ is straightforward as $w^0_m \equiv w^0$ for any $m\in \bN$. Let $n\in \bN$ and assume that the assertion holds for $w^n$; we show it for $w^{n+1}$. The proof is based on the argument used in~\cite[Lemma 4.1]{JelPitSte2019a}.

For any $m\in \bN$, $t\in [0,T]$ and $x\in E$, we set
\[
h^{n+1}(t,x):=\exp \left(w^{n+1}(t,x)\right),\quad h^{n+1}_m(t,x):=\exp\left( w_{m}^{n+1}(t,x)\right).
\]
Noting that $\min\left( h^{n+1}(t,x),h^{n+1}_m(t,x)\right)\geq e^{-T(n+2)\Vert f\Vert}$ for any $m\in \bN$, $t\in [0,T]$, and $x\in E$, and using inequality $|\ln y-\ln z|\leq \frac{1}{\min(y,z)}|y-z|$, for $y,z> 0$, we get
\begin{equation}\label{eq:acalc0}
0  \leq  w^{n+1}_m(t,x)-w^{n+1}(t,x)\leq e^{T(n+2)\Vert f\Vert} (h^{n+1}_m(t,x)-h^{n+1}(t,x)).
\end{equation}
Thus, it is enough to show that $h^{n+1}_m(t,x)\to h^{n+1}(t,x)$ uniformly on $[0,T]\times\Gamma$. Before we do that, we need to introduce some auxiliary notation and results.

Consider any $t\in [0,T]$ and $x\in \Gamma$. Let $\varepsilon>0$ and $\tau^{(\varepsilon,t,x)}\leq t$ be an $\varepsilon$-optimal stopping time for $h^{n+1}(t,x)$. For any $m\in \bN$, we set
\[
\tau_m^{(\varepsilon,t,x)} :=\textstyle \sum_{j=1}^{\lfloor(T-t)2^m\rfloor}1_{\left\{\frac{j-1}{2^m}< \tau^{(\varepsilon,t,x)}\leq \frac{j}{2^m}\right\}}\frac{j}{2^m}.
\]
In the following, for brevity, we write $\tau$ and $\tau_m$ instead of $\tau^{(\varepsilon,t,x)}$ and $\tau_m^{(\varepsilon,t,x)}$, and for $m\in \bN$, $s,u\in [0,T]$, and $y,z\in E$, we set
\begin{align*}
Z^n(t,\tau) &:=\exp\left(\textstyle \int_0^\tau f(X_s)ds+\widetilde{M} w^n(t+\tau,X_\tau)\right),\\
A^n_m(s,y)& : =|w^n_m(s,y)-w^n(s,y)|,\\
B^n_m(s,u,y,z)&:=\left|\widetilde{M} w^n_m(s,y)-\widetilde{M} w^n(u,z)\right|,\\
C^n(s,u,y,z)& :=|w^n(s,y)-w^n(u,z)|+\sup_{\xi\in U} |c(y,\xi)-c(z,\xi)|+\sup_{\xi\in U} |w^n(s,\xi)-w^n(u,\xi)|.
\end{align*}
Note that, by Lemma~\ref{lm:cont_w_hetero}, the function $C^n$ is jointly continuous and bounded. Moreover, by induction assumption, $A^n_m(s,y)\to 0$ as $m\to \infty$ uniformly in $[0,T]\times \hat\Gamma$ where $\hat\Gamma\subset E$ is compact. Also, for any $m\in \bN$, $s,u\in [0,T]$, and $y,z\in E$, we get
\begin{align}\label{eq:acalc2}
B^n_m(s,u,y,z) & \leq |w^n_m(s,y)-w^n(u,z)|+ \sup_{\xi\in U} |c(y,\xi)-c(z,\xi)|+\sup_{\xi\in U}|w^n_m(s,\xi)-w^n(u,\xi)| \nonumber\\
& \leq  |w^n_m(s,y)-w^n(s,y)|+|w^n(s,y)-w^n(u,z)|+\sup_{\xi\in U} |c(y,\xi)-c(z,\xi)| \nonumber \\
&\phantom{=} +\sup_{\xi\in U}|w^n_m(s,\xi)-w^n(s,\xi)|+\sup_{\xi\in U}|w^n(s,\xi)-w^n(u,\xi)|\nonumber \\
& \leq  A^n_m(s,y) + C^n(s,u,y,z)+\sup_{v\in [0,T]}\sup_{\xi\in U}A^n_m(v,\xi).
\end{align}
Next, recalling \eqref{A3} we can find $R>0$ such that
\begin{equation}\label{eq:acalc3}
\sup_{x\in \Gamma} \bP_x \left[\sup_{s\in [0,T]}\rho(X_s,x)> R\right]\leq \varepsilon.
\end{equation}
Let $B:=\{x\in E\colon\rho(x,\Gamma)\leq R+1\}$. Using induction assumption and compactness of $U$, we may find $m_0\in \bN$ such that for any $m\geq m_0$ we get
\begin{equation}\label{eq:acalc4}
\sup_{s\in [0,T]}\sup_{y\in B\cup U}A^n_m(s,y)\leq \varepsilon.
\end{equation}
Also, recalling $\delta_m=2^{-m}$ and noting that $C^n$ is uniformly continuous on $[0,T]^2\times B \times B$ and $C^n(s,s,y,y)=0$ for any $s\in [0,T]$ and $y\in E$, we may find $r_1>0$ and $m_1\in \bN$ such that for any $m\geq m_1$ we get
\begin{equation}\label{eq:acalc5}
\sup_{\substack{s,u\in [0,T]\\|s-u|\leq \delta_m}} \sup_{\substack{y,z\in B\\\rho(y,z)\leq r_1}} C^n(s,u,y,z) \leq \varepsilon.
\end{equation}
Let $r:=\min(r_1,\frac{1}{2})$. By \eqref{A3} we may find $m_2\in \bN$ such that for any $m\geq m_2$ we get
\begin{equation}\label{eq:acalc6}
\sup_{x\in B} \bP_x\left[ \sup_{s\in [0,\delta_m]}\rho(X_s,x)\geq r\right]\leq \varepsilon \quad \text{and} \quad \Vert f\Vert \delta_m\leq \varepsilon.
\end{equation}
Finally, it is useful to note that $\Vert w^k\Vert\leq T\Vert f\Vert$ and $\Vert w^k_m\Vert\leq T\Vert f\Vert$, for $k, m\in \bN$; these follow from Lemma~\ref{lm:finite_horizon_Bellman} and the fact that the cost of {\it no impulse} strategy is bounded from above by $T\Vert f\Vert$.

Let us now go back to the proof of uniform convergence of $h^{n+1}_m(t,x)\to h^{n+1}(t,x)$. Recalling boundedness of $f$ and using inequality $\vert \tau_m-\tau\vert\leq \delta_m$, for any $t\in [0,T]$ and $x\in \Gamma$, we get
\begin{align}\label{eq:acalc1}
0 &\leq h^{n+1}_m(t,x)-h^{n+1}(t,x)\nonumber \\
 &\leq  \bE_x\left[\exp\left(\int_0^{\tau_m} f(X_s)ds+\widetilde{M} w^n_m(t+\tau_m,X_{\tau_m})\right)\right] -\bE_x\left[Z^n\right] +\varepsilon \nonumber\\
&\leq e^{\Vert f\Vert \delta_m}\bE_x\left[Z^n(t,\tau)e^{B^n_m(t+\tau_m,t+\tau,X_{\tau_m},X_{\tau})}\right] -\bE_x\left[Z^n(t,\tau)\right] +\varepsilon.
\end{align}
Noting that $Z^n(t,\tau)\leq e^{\Vert c\Vert+2T\Vert f\Vert}$ and $\Vert B^n_m\Vert\leq  2\Vert c\Vert+4T\Vert f\Vert$ for any $m\in \bN$, and using~\eqref{eq:acalc2}--\eqref{eq:acalc4}, for any $t\in [0,T]$, $x\in \Gamma$ and $m\geq \max(m_0,m_1,m_2)$, we get
\begin{multline}\label{eq:acalc7}
\bE_x\left[Z^n(t,\tau) e^{B^n_m(t+\tau_m,t+\tau,X_{\tau_m},X_{\tau}))}\right] \\
\leq e^\varepsilon\bE_x\left[ 1_{\{X_{\tau} \leq R\}} Z^n(t,\tau) e^{A^n_m(t+\tau_m,X_{\tau_m})+C^n(t+\tau_m,t+\tau,X_{\tau_m},X_{\tau}))}\right]+\varepsilon K_1,
\end{multline}
where $K_1:= e^{3\Vert c\Vert+ 6T\Vert f\Vert}$. Let $D:=\{\sup\limits_{s\in [0,\delta_m]}\rho(X_s,X_0)\leq r\}$. Using~\eqref{eq:acalc4}--\eqref{eq:acalc6}, on the set $\{X_{\tau} \leq R\}$, we get
\begin{align*}
\bE_{X_{\tau}}\left[ 1_{D} \,\exp\left(\sup\limits_{s\in [0,T]}\sup\limits_{v\in [0,\delta_m]}A^n_m(s,X_v)+\sup\limits_{|s-u|\leq \delta_m}\sup\limits_{v \in [0,\delta_m] }C^n(s,u,X_v,X_0)\right)\right] & \leq e^{ 2\varepsilon},\\
\bE_{X_{\tau}}\left[ 1_{D'}\, \exp\left(\sup\limits_{s\in [0,T]}\sup\limits_{v\in [0,\delta_m]}A^n_m(s,X_v)+\sup\limits_{|s-u|\leq \delta_m}\sup\limits_{v \in [0,\delta_m] }C^n(s,u,X_v,X_0)\right)\right] & \leq \varepsilon K_2,
\end{align*}
where $K_2:= e^{2\Vert c\Vert + 6 T\Vert f\Vert}$. Thus, using strong Markov property, for any $t\in [0,T]$ and $x\in \Gamma$, we get
\begin{equation*}
\bE_x\left[ 1_{\{X_{\tau} \leq R\}} Z^n(t,\tau) e^{A^n_m(t+\tau_m,X_{\tau_m})+C^n(t+\tau_m,t+\tau,X_{\tau_m},X_{\tau}))}\right]\leq e^{ 2\varepsilon}\bE_x\left[Z^n(t,\tau)\right]+ \varepsilon K_3,
\end{equation*}
where $K_3:= K_2 e^{\Vert c\Vert+2 T\Vert f\Vert}$. Hence, recalling~\eqref{eq:acalc7}, we get
\begin{equation}\label{eq:acalc8}
\bE_x\left[Z^n(t,\tau) e^{B^n_m(t+\tau_m,t+\tau,X_{\tau_m},X_{\tau}))}\right]\leq e^{3\varepsilon} \bE_x\left[Z^n(t,\tau)\right]+\varepsilon e^{\varepsilon} K_3+\varepsilon K_1.
\end{equation}
Recalling that $\Vert f\Vert \delta_m\leq \varepsilon$ and combining~\eqref{eq:acalc8} with~\eqref{eq:acalc1}, for any $t\in [0,T]$ and $x\in \Gamma$, we finally get
\begin{align*}
0\leq h^{n+1}_m(t,x)-h^{n+1}(t,x) & \leq  \left(e^{4\varepsilon}-1\right)e^{2T\Vert f\Vert+\Vert c\Vert} + \varepsilon e^{2\varepsilon} K_3+ \varepsilon e^\varepsilon K_1+\varepsilon.
\end{align*}
Noting that the upper bound is independent of $t$ and uniform on $\Gamma$, and recalling that $\varepsilon>0$ was arbitrary, we get $h^{n+1}_m(t,x)\to h^{n+1}(t,x)$ as $m\to \infty$ uniformly on $[0,T]\times \Gamma$. Thus, recalling~\eqref{eq:acalc0}, we conclude the proof.
\end{proof}
Finally, we show that the optimal value of the finite horizon impulse control problem may be approximated by the strategies with finitely many impulses.
\begin{lemma}\label{lm:finite_horizon_conv2}
For any $T\in \bN$ and $x\in E$ we get $\inf\limits_{V\in \bV^n} J_T(x,V) \to \inf\limits_{V\in \bV} J_T(x,V)$ as $n\to \infty$.
\end{lemma}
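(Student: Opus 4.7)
Using Lemma~\ref{lm:finite_horizon_Bellman} I would rewrite $\inf_{V\in\bV^n}J_T(x,V)=w^n(0,x)$. Since $\bV^n\subset\bV^{n+1}\subset\bV$, the sequence $(w^n(0,x))$ is nonincreasing and bounded below by $\inf_{V\in\bV}J_T(x,V)$, which gives the easy direction $\lim_n w^n(0,x)\ge \inf_{V\in\bV}J_T(x,V)$. For the reverse I would assume $\inf_{V\in\bV}J_T(x,V)<\infty$ (otherwise the claim is trivial) and prove that the pointwise limit $w^\infty(t,x):=\lim_n w^n(t,x)$, which is well-defined because $|w^n|\le T\|f\|$ uniformly in $n,t,x$, satisfies $w^\infty(0,x)\le J_T(x,V)$ for every $V\in\bV$ with $J_T(x,V)<\infty$.

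The first step is to pass to the limit in the Bellman recursion \eqref{eq:Bellman_finite}. Monotonicity of $\widetilde{M}$ in its argument combined with $w^n\downarrow w^\infty$ yields $\widetilde{M}w^n(t,x)\downarrow \widetilde{M}w^\infty(t,x)$ pointwise: the inner infimum over the compact set $U$ passes to the limit via a standard compactness/continuity argument using the continuity of each $w^n$ on $U$. Bounded convergence then gives pointwise convergence of the inner expectation at every fixed $\tau\in\mathcal{T}$, and an $\varepsilon$-optimal selection argument commutes the infimum over stopping times with the monotone limit, yielding the fixed-point identity
\[
e^{w^\infty(t,x)}=\inf_{\tau\le T-t}\bE_x\!\left[\exp\!\left(\int_0^\tau f(X_s)\,ds+\widetilde{M}w^\infty(t+\tau,X_\tau)\right)\right],\qquad w^\infty(T,\cdot)\equiv 0.
\]

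With this identity in hand, I would imitate the verification argument from the proof of Theorem~\ref{th:opt_strategy_continuous}(1). Fix $V=(\sigma_i,\zeta_i)\in\bV$ with $J_T(x,V)<\infty$ and iterate the fixed-point identity along the impulse times of $V$, using $\widetilde{M}w^\infty(t,y)\le c(y,\zeta_i)+w^\infty(t,\zeta_i)$ at the $i$-th application together with the strong Markov property of the controlled process from \cite{Rob1978}. A straightforward induction gives, for every $n\in\bN$,
\[
e^{w^\infty(0,x)}\le \bE_{(x,V)}\!\left[\exp\!\left(\int_0^{\sigma_n\wedge T} f(X_s)\,ds+\sum_{i=1}^n 1_{\{\sigma_i\le T\}}c(X_{\sigma_i^-},\zeta_i)+1_{\{\sigma_n\le T\}}w^\infty(\sigma_n,\zeta_n)\right)\right].
\]
Since $c\ge c_0>0$ by \eqref{A2} and $J_T(x,V)<\infty$, the number $N_T$ of impulses of $V$ before $T$ satisfies $\bE_{(x,V)}[e^{c_0 N_T}]<\infty$, so $N_T<\infty$ $\bP_{(x,V)}$-a.s.\ and hence $\sigma_n\to\infty$ a.s. Combined with $|w^\infty|\le T\|f\|$, this lets me invoke dominated convergence (with integrable envelope $e^{2T\|f\|}\exp(\sum_{i\ge 1}1_{\{\sigma_i\le T\}}c(X_{\sigma_i^-},\zeta_i))$, integrable by finiteness of $J_T(x,V)$) to send $n\to\infty$ and conclude $w^\infty(0,x)\le J_T(x,V)$. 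Taking the infimum over $V$ yields $\lim_n w^n(0,x)\le \inf_{V\in\bV}J_T(x,V)$, completing the proof.

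The main obstacle is the verification step: correctly tracking the time parameter in $w^\infty$ while iterating and properly invoking the strong Markov property at the random impulse times $\sigma_i$ on the product canonical space, together with the concluding dominated-convergence argument for which the exponential moment $\bE_{(x,V)}[e^{c_0 N_T}]<\infty$ implied by \eqref{A2} is essential. Passing to the limit in the Bellman recursion is a secondary technical point, but the monotone decrease of $w^n$ makes both the interior (infimum over $\xi\in U$) and exterior (infimum over $\tau$) interchanges routine.
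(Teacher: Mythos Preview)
Your argument is correct, but it follows a substantially different route from the paper. The paper's proof is far more elementary: it fixes an $\varepsilon$-optimal strategy $V^\varepsilon\in\bV$, truncates it to its first $n$ impulses to obtain $V^\varepsilon_n\in\bV^n$, and shows directly that $e^{J_T(x,V^\varepsilon_n)}-e^{J_T(x,V^\varepsilon)}\to 0$. This is done by splitting on $\{N_T\le n\}$ versus $\{N_T>n\}$ (where $N_T$ is the number of impulses up to $T$): on the first event the two strategies coincide, and on the second the difference is dominated by an integrable term that vanishes as $n\to\infty$ by bounded convergence. No Bellman limit equation or verification is needed.

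Your approach instead identifies the monotone limit $w^\infty$, shows it satisfies a fixed-point Bellman equation (via the monotone interchange of $\lim$ and $\inf$), and then runs a verification argument \`a la Theorem~\ref{th:opt_strategy_continuous}(1). This is heavier machinery but yields the additional information that $w^\infty$ solves the Bellman equation, which the truncation argument does not provide. The cost is that you must handle measurability of $w^\infty$ (it is only upper semicontinuous as a decreasing limit of continuous functions) and carefully justify the strong Markov iteration on the product canonical space with the time parameter; these are genuine technical points you correctly flag. The paper's truncation proof sidesteps all of this, needing only that $J_T(x,V^\varepsilon)<\infty$ and $1_{\{N_T>n\}}\to 0$ a.s., which follow as in your final dominated-convergence step.
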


\begin{proof}
Using monotonicity of the exponent function, it is enough to show
\begin{equation}\label{eq:ccccl1}
\lim_{n\to \infty}\inf_{V\in \bV^n} \exp\left(J_T(x,V)\right)= \inf_{V\in \bV}\exp\left( J_T(x,V)\right).
\end{equation}

Let $\varepsilon>0$ and $V^\varepsilon=(\tau_i,\xi_i)_{i=1}^{\infty}\in \bV$ be an $\varepsilon$-optimal strategy for $\inf_{V\in \bV}e^{J_T(x,V)}$. For any $n\in \bN$, let $V^\varepsilon_n\in \bV^n$ denote the restriction of $V^\varepsilon$ to the first $n$ impulses. Then, for any $n\in \bN$, we get
\begin{equation}\label{eq:ccccl1.5}
0 \leq \inf_{V\in \bV^n}e^{J_T(x,V)}-\inf_{V\in \bV} e^{J_T(x,V)}\leq  e^{J_T(x,V^\varepsilon_n)}-e^{J_T(x,V^\varepsilon)}+\varepsilon.
\end{equation}
For clarity, we use $X$ and $Y$ to denote the processes controlled by the strategies $V^\vep$ and $V^\vep_n$, respectively.
Furthermore, we set $N_T:=\sum_{i=1}^\infty 1_{\{\tau_i\leq T\}}$ and
\begin{align*}
A_n(x):= & \bE_{(x,V^\vep_n)}\left[1_{\{N_T> n\}} \exp\left(\int_0^{T} f(Y_s) ds+\sum_{i=1}^n 1_{\{\tau_i\leq T\}}c(Y_{\tau_i^-},\xi_i)\right)\right]\nonumber\\
& - \bE_{(x,V^\vep)}\left[1_{\{N_T> n\}} \exp\left(\int_0^{T} f(X_s) ds+\sum_{i=1}^\infty 1_{\{\tau_i\leq T\}}c(X_{\tau_i^-},\xi_i)\right)\right],
\end{align*}
for any $n\in\bN$ and $x\in E$. Noting that, for any $n\in \bN$, we get
\begin{multline*}
\bE_{(x,V^\vep_n)}\left[1_{\{N_T\leq n\}} \exp\left(\int_0^{T} f(Y_s) ds+\sum_{i=1}^n 1_{\{\tau_i\leq T\}}c(Y_{\tau_i^-},\xi_i)\right)\right] \\
=\bE_{(x,V^\vep)}\left[1_{\{N_T\leq n\}} \exp\left(\int_0^{T} f(X_s) ds+\sum_{i=1}^\infty 1_{\{\tau_i\leq T\}}c(X_{\tau_i^-},\xi_i)\right)\right],
\end{multline*}
and recalling \eqref{eq:ccccl1.5}, we get
\[
0 \leq \inf_{V\in \bV^n}e^{J_T(x,V)}-\inf_{V\in \bV} e^{J_T(x,V)}\leq |A_n(x)|+\epsilon.
\]
Next, using boundedness of $f$, non-negativity of $c$ and the fact that $V^\vep_n$ is the restriction of $V^\vep$, we get
\begin{equation*}
|A_n(x)| \leq 2\bE_{(x,V^\vep)}\left[1_{\{N_T> n\}} \exp\left(T \Vert f \Vert+\sum_{i=1}^\infty 1_{\{\tau_i\leq T\}}c(X_{\tau_i^-},\xi_i)\right)\right].
\end{equation*}
Consequently, noting that $\bE_{(x,V^\vep)}\left[e^{\sum_{i=1}^\infty 1_{\{\tau_i\leq T\}}c(X_{\tau_i^-},\xi_i)}\right]<\infty$ and $1_{\{N_T> n\}}\to 0$ as $n\to \infty$, and using bounded convergence theorem, we get $|A_n(x)|  \to 0$, as $n\to\infty$, which concludes the proof.
\end{proof}

\end{document}